\numberwithin{equation}{section}
\newtheorem{thm}{Theorem}
\newtheorem{coro}{Corollary}
\newtheorem{theorem}{Theorem}[section]
\newtheorem{lemma}[theorem]{Lemma}
\newtheorem{corollary}[theorem]{Corollary}
\newtheorem{proposition}[theorem]{Proposition}
\theoremstyle{remark}
\renewcommand{\a}{\mathbf{a}}
\renewcommand{\b}{\mathbf{b}}
\newcommand{\mc}{\mathcal}
\newcommand{\mb}{\mathbb}
\newcommand{\f}{\mathbf{f}}
\renewcommand{\t}{\mathbf{t}}
\newcommand{\T}{\mathbf{T}}
\newcommand{\x}{\mathbf{x}}
\newcommand{\X}{\mathbf{X}}
\renewcommand{\u}{\mathbf{u}}
\newcommand{\U}{\mathbf{U}}
\newcommand{\ds}{\displaystyle}
\renewcommand{\v}{\mathbf{v}}
\newcommand{\V}{\mathbf{V}}
\newcommand{\h}{\mathbf{h}}
\renewcommand{\k}{\mathbf{k}}
\newcommand{\Y}{\mathbf{Y}}
\newcommand{\Z}{\mathbf{Z}}
\newcommand{\z}{\mathbf{z}}
\newcommand{\s}{{^{_{_{_\leq}}}}}
\newcommand{\SES}{1\rightarrow N\xrightarrow{\iota} G\xrightarrow{\pi} F\rightarrow 1}
\newcommand{\opn}{\operatorname}
\begin{document}
\title[Zeta functions of virtually nilpotent groups]{Zeta functions of virtually nilpotent groups}
\author{Diego Sulca }
\address{Universidad Nacional de C\'ordoba, Ciudad Universitaria, 5000 C\'ordoba.}

\email{sulca@famaf.unc.edu.ar}

\subjclass[2010]{20E07, 11M41}
\date{\today}
\thanks{The work was partially supported by CONICET and the Research Fund of the KU Leuven}
\begin{abstract}
We prove that the subgroup zeta function and the normal zeta function of a  finitely generated virtually nilpotent group are finite sums
of Euler products of cone integrals over $\mathbb{Q}$ and we deduce from this that they have rational abscissa of convergence and some meromorphic continuation. We also define Mal'cev completions of a finitely generated virtually nilpotent group and we prove that the subgroup growth and the normal subgroup growth of the latter are invariants of its $\mathbb{Q}$-Mal'cev completion.
\end{abstract}

\maketitle

%======================
\section*{Introduction}
%======================

\noindent For a finitely generated group $G$ and a positive integer $n$, let $a_n^\s(G)$ and $a_n^\lhd(G)$ denote respectively the number of subgroups and normal subgroups of $G$ of index $n$. The subgroup zeta function of $G$ and the local subgroup zeta function of $G$ at a rational prime $p$ are the Dirichlet series
\[\zeta_G^\s(s)=\sum_{n=1}^{\infty}\frac{a_n^\s(G)}{n^s}\ \ \mbox{and}\ \ \zeta_{G,p}^\s(s)=\sum_{k=0}^{\infty}\frac{a_{p^k}^\s(G)}{p^{ks}}.\]
The normal zeta function and the local normal zeta function of $G$ at a prime $p$,
denoted by $\zeta_G^\lhd(s)$ and $\zeta_{G,p}^\lhd(s)$ respectively, are defined similarly using $a_n^\lhd(G)$ instead of $a_n^\s(G)$.
We shall use the symbol $*$ when we address both $\leq$ and $\lhd$ simultaneously.

These series were introduced by Grunewald, Segal and Smith in \cite{GSS} as a tool to study, among other things, the arithmetic properties and the asymptotic behaviour of the sequence $\{a_n^*(G)\}$ in relation with the structure of $G$.
From the general theory of Dirichlet series we know that if for some complex number $s$ the series $\zeta_G^*(s)$ converges then there exists a number $\alpha_G^*\in \{-\infty\}\cup\mathbb{R}$, called the abscissa of convergence of $\zeta_G^*(s)$, such that $\zeta_G^*(s)$ defines an analytic function on the region $\{s\in\mathbb{C}:\Re(s)>\alpha_G^*\}$ and diverges at any $s\in\mathbb{C}$ with $\Re(s)<\alpha_G^*$, and this is the case if and only if the sequence $\{a_n^*(G)\}$ grows at most polynomially, that is, there exists $\alpha>0$ such that $a_n^*(G)\leq n^\alpha$ for all $n$.  In such a case, one says that $G$ has polynomial subgroup growth or polynomial normal subgroup growth. Note that $\alpha_G^\lhd\leq\alpha_G^\s$.

A finitely generated 
residually finite group $G$ (i.\ e.\ the intersection of all its finite index subgroups is trivial) has polynomial subgroup growth if and only if it contains
a finite index subgroup which is solvable of finite rank \cite{LMS}.
The finitely generated virtually nilpotent groups (i.\ e.\ finitely generated groups containing a nilpotent normal subgroup of finite index), and in particular the $\tau$-groups (finitely generated torsion free nilpotent groups) have this property.

For a $\tau$-group $G$ the following results  were obtained in \cite{GSS} and \cite{dSG}.
\begin{enumerate}[I.]
\item $\zeta_{G}^*(s)$ has rational abscissa of convergence $\alpha_G^*\leq h(G)$, where $h(G)$ is the Hirsch length of $G$,
      and this number depends only on the $\mb{Q}$-Mal'cev completion of $G$.
\item $\zeta_{G}^*(s)$ has a meromorphic continuation to the left, that is, there exists $\delta>0$ such that $\zeta_{G}^*(s)$
       can be extended meromorphically  to the region $\{s\in\mb{C}:\Re(s)>\alpha_G^*-\delta\}$. The continued function is holomorphic on the line $\{s\in \mb{C}:\Re(s)=\alpha_G^*\}$ except for a pole  at $s=\alpha_G^*$.
\item $\zeta_{G}^*(s)$ decomposes as an Euler product $\displaystyle{\zeta_{G}^*(s)=\prod_{p\ \textrm{prime}}\zeta_{G,p}^*(s)}$, and for each rational prime $p$,
       there exist polynomials $P_p,Q_p\in\mb{Q}[T]$ of bounded degrees such that
       $\zeta_{G,p}^*(s)=\frac{P_p(p^{-s})}{Q_p(p^{-s})}$.
       \end{enumerate}

The aim of this paper is to prove an extension of these results to the class of finitely generated virtually nilpotent groups. A previous work in this direction is \cite{dSMS}, by du Sautoy, McDermott and Smith, on zeta functions of finitely generated virtually abelian groups, where it is proved that these zeta functions admit a meromorphic continuation to the whole plane. In our case, this property is far from being true in general; even for zeta functions of $\tau$-groups, natural boundaries arise \cite{dSG2}. Rather than following the techniques in \cite{dSMS} we will express our zeta functions in terms of Euler products of cone integrals over $\mb{Q}$ and deduce the desired properties from \cite{dSG}. First, we introduce some notations and recall results about cone integrals in order to state our results precisely.

For a group $G$, let $\mathscr{F}_G^\s$ and  $\mathscr{F}_G^\lhd$ denote respectively the lattice of subgroups and normal subgroups of $G$ of finite index.  

Let $S:1\rightarrow N\xrightarrow{\iota} G\xrightarrow{\pi} F\rightarrow 1$ be a short exact sequence of groups with $F$ finite. For $K\in\mathscr{F}_F^*$ and a rational prime $p$ we define
\begin{align*}
\mathscr{F}_{S,K}^*&=\{A\in\mathscr{F}_G^*:\pi(A)=K\}\\
\mathscr{F}_{S,K,p}^*&=\{A\in\mathscr{F}_G^*:\pi(A)=K,[\pi^{-1}(K):A]\ \mbox{is a power of }p\}.\end{align*}
Since $\ds\mathscr{F}_G^*=\bigcup_{K\in\mathscr{F}_F^*}\mathscr{F}_{S,K}^*$ and $[G:A]=[F:K][\pi^{-1}(K):A]$ for any $A\in \mathscr{F}_{S,K}^*$, one obtains
\begin{align}\label{descomposition into sum of relative zeta functions}
\zeta_G^*(s)&=\sum_{A\in \mathscr{F}_G^*}[G:A]^{-s}
 =\sum_{K\in\mathscr{F}_{F}^*}[F:K]^{-s}\sum_{A\in\mathscr{F}_{S,K}^*}[\pi^{-1}(K):A]^{-s}.
\end{align}
Defining 
\begin{align}\label{relative zeta function}\zeta_{S,K}^*(s)=\sum_{A\in\mathscr{F}_{S,K}^*}[\pi^{-1}(K):A]^{-s},
\end{align}
the last expression becomes
\begin{align}\label{decomposition into sum of relative zeta function with new notation}
  \zeta_G^*(s)=\sum_{K\in\mathscr{F}_{F}^*}[F:K]^{-s}\zeta_{S,K}^*(s).
\end{align}
It follows that the analytic properties of $\zeta_G^*(s)$ can be deduced from the analytic properties of the series $\zeta_{S,K}^*(s)$ which, therefore, will be central in our study. For a rational  prime $p$, we shall also consider the series
\begin{align}\label{local relative zeta function}\zeta_{S,K,p}^*(s)=\sum_{A\in\mathscr{F}_{S,K,p}^*}[\pi^{-1}(K):A]^{-s}.\end{align}

These series are also the focus of the aforementioned work \cite{dSMS} on zeta functions of finitely generated virtually abelian groups. Assuming that $N$ is a finitely generated free abelian group, then there is an Euler product decomposition \cite[Proposition 2.2]{dSMS}
\begin{align}\label{Euler product for relative zeta functions}\zeta_{S,K}^*(s)=\prod_{p\  \textrm{prime}}\zeta_{S,K,p}^*(s).
\end{align}
This will be proved again in Section 2 by observing that we only need to assume that $N$ is a $\tau$-group.

A finitely generated virtually nilpotent group is also a virtually $\tau$-group. Thus, rather than talking about finitely generated virtually nilpotent groups, we shall consider short exact sequences of groups $S:1\rightarrow N\xrightarrow{\iota} G\xrightarrow{\pi} F\rightarrow 1$, where $N$ is a $\tau$-group and $F$ is finite and, by abuse of the language, we shall call them virtually $\tau$-groups too.

Let $S:1\rightarrow N\xrightarrow{\iota} G\xrightarrow{\pi} F\rightarrow 1$ be a virtually $\tau$-group. For $K\in \mathscr{F}_F^*$, several analytic properties of $\zeta_{S,K}^*(s)$ can be obtained by expressing it as an Euler product of cone integrals over $\mb{Q}$.
Given a natural number $m$, we call a collection of polynomials
$ \mc{D}=\{f_0,g_0;f_1,g_1,\ldots,f_l,g_l \}$,
with $f_0,g_0,f_1,g_1,\ldots,f_l,g_l\in\mb{Q}[x_1,\ldots,x_m]$
a  \emph{cone integral data}. For each rational prime $p$, we associate to $\mc{D}$ the closed subset of $\mb{Z}_p^m$
\[ \mc{M}(\mc{D},p)=\{\x\in\mb{Z}_p^m:v_p(f_i(\x))\leq v_p(g_i(\x))\ \mbox{for}\ i=1,\ldots,l\}, \]
where $v_p$ is the $p$-adic valuation on $\mb{Z}_p$.
Then the $p$-adic integral
\[ Z_{\mc{D}}(s,p)=\int_{\mc{M}(\mc{D},p)}|f_0(\x)|_p^s|g_0(\x)|_pd\mu(\x), \]
where $\mu$ is the normalized Haar measure on $\mb{Z}_p^m$ and $s$ is a complex number,
is called a \emph{cone integral} over $\mb{Q}$.
Each $Z_{\mc{D}}(s,p)$ is a power series
\[ Z_{\mc{D}}(s,p)=\sum_{i=0}^{\infty}a_{p,i}(\mc{D})p^{-is} \]
with rational coefficients and it is a rational function in $p^{-s}$ ~\cite{De}.
A complex function $Z(s)$ is said to be an Euler product of cone integrals over $\mb{Q}$ with cone integral data $\mc{D}$ if
\[Z(s)=\prod_{\substack{p\ \textrm{prime}\\ a_{p,0}(\mc{D})\neq 0}}(a_{p,0}^{-1}(\mc{D})\cdot Z_{\mc{D}}(s,p)),\]
and in this case one writes $Z(s)=Z_\mc{D}(s)$.
It is proved in \cite{dSG} that such a function $Z(s)$ is expressible as a Dirichlet series $\sum_{n=1}^\infty a_nn^{-s}$
with non-negative coefficients,
 and if  the function $Z_\mc{D}(s,p)$ is not the constant function for almost all primes $p$, then the following hold:
\begin{enumerate}[I.]
\item {The abscissa of convergence $\alpha_{\mc{D}}$ of $Z_{\mc{D}}(s)=\sum_{n=1}^{\infty}a_nn^{-s}$ is a rational number.}
\item {$Z_{\mc{D}}(s)$ has meromorphic continuation to $\{s\in\mb{C}
:\Re(s)>\alpha_\mc{D}-\delta\}$ for some $\delta >0$. The continued function is holomorphic on the line  $\{s\in\mb{C}:\Re(s)=\alpha_{\mc{D}}\}$ except for a pole at $s=\alpha_{\mc{D}}$.}
\item {The abscissa of convergence of each local factor is strictly less than  $\alpha_{\mc{D}}$.}
\item {Each $Z_\mc{D}(s,p)$ is a rational function $\frac{P_p(p^{-s})}{Q_p(p^{-s})}$, where $P_p$ and $Q_p$ are polynomials with rational coefficients of degrees bounded by a constant not depending of $p$.}
\end{enumerate}

Our first main result is
\begin{thm}
Let $S:\SES$ be a virtually $\tau$-group and let $*\in\{\leq,\lhd\}$.
Then, for each $K\in\mathscr{F}_F^*$, $\zeta_{S,K}^*(s)$ is an Euler product of cone integrals over $\mb{Q}$. More precisely, for some cone integral data $\mc{D}^*$ (depending on $K$) we have
\[\zeta_{S,K}^{*}(s)=Z_{\mc{D}^*}(s-h(N)-|K|+1).\]
\end{thm}
As a consequence of one of the main steps in the proof of Theorem 1 we get
\begin{coro}
The subgroup zeta function and the normal zeta function of a $\tau$-group are Euler products of cone integrals over $\mb{Q}$.
\end{coro}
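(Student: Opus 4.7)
The plan is to deduce the corollary as the special case of Theorem~\ref{thm2} in which the quotient group $F$ is trivial. For a $\tau$-group $N$, the identity sequence
$$S_0:\,1\to N\xrightarrow{\mathrm{id}} N\xrightarrow{\pi} 1\to 1$$
is a (tautological) virtually $\tau$-group in the sense of this paper. Since $F=\{1\}$, the lattice $\mathscr{F}_F^{*}$ consists of the single subgroup $K=\{1\}$, for which $|K|=1$, $[F:K]=1$, and $\pi^{-1}(K)=N$. Hence $\mathscr{F}_{S_0,K}^{*}=\mathscr{F}_N^{*}$, and the decomposition \eqref{decomposition into sum of relative zeta function with new notation} collapses to the single identity
$$\zeta_N^{*}(s)=\zeta_{S_0,K}^{*}(s)=\sum_{A\in\mathscr{F}_N^{*}}[N:A]^{-s}.$$

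Applying Theorem~\ref{thm2} to the pair $(S_0,K)$ then supplies cone integral data $\mc{D}^{*}$ with
$$\zeta_N^{*}(s)=Z_{\mc{D}^{*}}\bigl(s-h(N)-|K|+1\bigr)=Z_{\mc{D}^{*}}\bigl(s-h(N)\bigr),$$
which is exactly the claim that $\zeta_N^{*}(s)$ is an Euler product of cone integrals over $\mb{Q}$, up to a harmless shift by the non-negative integer $h(N)$ in the argument. (As elsewhere in the paper, the shift does not affect the Euler-product structure or the fact that each local factor is a rational function in $p^{-s}$; it merely translates the Dirichlet exponent.)

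I do not anticipate any substantive obstacle in the deduction itself: the entire load is carried by Theorem~\ref{thm2}, and the corollary is a direct specialization. The only point worth verifying is that the construction of $\mc{D}^{*}$ in the proof of Theorem~\ref{thm2} does not degenerate when $F=\{1\}$, but since the theorem is phrased uniformly for all virtually $\tau$-groups this trivial-quotient case is automatically included, without extra arguments.
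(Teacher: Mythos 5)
Your deduction is correct, but it takes a genuinely different route from the paper's own argument. The paper does not obtain this corollary by specializing Theorem~\ref{thm2} to $F=\{1\}$; instead (Corollary~\ref{Algoritmo,corollary 2} in Section~2) it proves it directly, by starting from the Grunewald--Segal--Smith expression $\zeta_N^*(s)=\prod_p (1-p^{-1})^{-h}\int_{\mc{M}_p^*}\prod_{i}|t_{ii}|_p^{s-i}\,d\mu$ and invoking Corollary~\ref{Algoritmo, corollary}, which describes $\mc{M}_p^\s$ (and, with one extra cone condition coming from Lemma~2.4 of \cite{GSS}, $\mc{M}_p^\lhd$) by polynomial divisibility conditions whose defining data are independent of $p$. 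This is exactly what the introductory phrase \emph{``as a consequence of one of the main steps in the proof of Theorem~1''} is signalling: the corollary is meant to fall out of the intermediate lemma about $\mc{M}_p^*$, not out of Theorem~\ref{thm2} as a finished product. Your specialization to the tautological sequence $1\to N\to N\to 1\to 1$ does give the same conclusion $\zeta_N^*(s)=Z_{\mc{D}^*}(s-h(N))$, and it is non-circular since the corollary itself is nowhere used in proving Theorem~\ref{main theorem} or Theorem~\ref{thm2}. The trade-off is one of economy: the paper's direct proof never needs the Euler product for relative zeta functions (Proposition~\ref{existence of Euler product}) nor the cocycle bookkeeping of Proposition~\ref{main theorem, application}, both of which degenerate to trivialities when $K\setminus\{1\}=\emptyset$ but are still invoked in your route; your route, on the other hand, makes manifest that the corollary is a formal specialization of the main theorem. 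One small caveat worth stating explicitly, which you glide over: $Z_{\mc{D}^*}(s-h(N))$ is a \emph{shift} of an Euler product of cone integrals, and the equality $\prod_p(1-p^{-1})^{-h}Z_{\mc{D}^*}(s-h,p)=Z_{\mc{D}^*}(s-h)$ uses that the leading coefficient $a_{p,0}(\mc{D}^*)$ equals $(1-p^{-1})^h$; the paper absorbs this into the stated form of Theorem~\ref{main theorem}, but a careful write-up of your deduction should note it.
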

This corollary allows us to apply the properties of cone integrals to zeta functions of $\tau$-groups directly, without the necessity of linearising  the computation of cone integral by using the Mal'cev correspondence between $\tau$-groups and Lie rings and with the lack of some local factors \cite[Remark after Corollary 5.6]{dSG}. However, computations of cone integrals here are much more difficult. The method to express $\zeta_{S,K}^*(s)$ as an Euler product of cone integrals is used in \cite{S} to compute the zeta functions of all finitely generated torsion free virtually nilpotent groups of Hirsch length 3, called 3-dimensional almost Bieberbach groups.

For a binomial domain $R$ (e.\ g.\ a field of characteristic zero), the $R$-Mal'cev completion of a virtually $\tau$-group (existence and uniqueness) will be considered in Section 1. 
If $S:\SES$ is a virtually $\tau$-group, the $R$-Mal'cev completion of $S$ is a short exact sequence $\mathfrak{S}:1\rightarrow \mathfrak{N}\xrightarrow{\iota_\mathfrak{S}} \mathfrak{G}\xrightarrow{\pi_\mathfrak{S}} \mathfrak{F}\rightarrow 1$ with a morphism $(i,j,k):S\rightarrow \mathfrak{S}$ of short exact sequences of groups such that $i:N\rightarrow \mathfrak{N}$ is an $R$-Mal'cev completion of $N$ and $k:F\rightarrow \mathfrak{F}$ is an isomorphism.  As a consequence of Theorem 1 we shall obtain
\begin{thm}
For $i=1,2$, let $S_i:1\rightarrow N_i\rightarrow G_i\rightarrow F_i\rightarrow 1$  be  a $\tau$-group with $\mb{Q}$-Mal'cev completion $\mathfrak{S}_i$. If $\mathfrak{S}_1$ and $\mathfrak{S}_2$ are isomorphic (as short exact sequences of groups), then there exists an isomorphism $\gamma:F_1\rightarrow F_2$ such that for $*\in\{\leq,\lhd\}$ the series $\zeta_{S_1,K}^*(s)$ and $\zeta_{S_2,\gamma(K)}^*(s)$ have the same abscissa of convergence for all $K\in\mathscr{F}_{F_1}^*$.
\end{thm}

Finally we collect the results stated above and some others on bounds for the subgroup growth and the normal subgroup growth of virtually nilpotent groups (\cite[Proposition 5.6.4]{LS} and Proposition \ref{normal subgroup growth}), and we include them in a last theorem.

\begin{thm}  Let $S:\SES$ be a virtually $\tau$-group. For $*\in\{\leq,\lhd\}$ the following hold.
\begin{enumerate}[I.]
\item $\zeta_{G}^{*}(s)$ has rational abscissa of convergence $\alpha_G^{*}\leq \alpha^*_N+1\leq h(G)+1$, which depends only on the $\mb{Q}$-Mal'cev completion of $S$.
\item There exists $\delta>0$ such that $\zeta_{G}^*(s)$ has a meromorphic continuation to the region $\Re(s)>\alpha^*_G-\delta$,
 and the line $\Re(s)=\alpha^*_G$ contains just one pole of $\zeta_{G}^*(s)$  (at the point $s=\alpha^*_G$).
\item More precisely, $\zeta_{G}^*(s)=\sum_{K\in\mathscr{F}_{F}^*}[F:K]^{-s}\zeta_{S,K}^*(s)$ and for each $K\in\mathscr{F}_F^*$ the series $\zeta_{S,K}^*(s)$ has rational abscissa of convergence $\alpha^*_{S,K}$, it has meromorphic continuation to  $\{s\in\mb{C}:\Re(s)>\alpha^*_{S,K}-\delta\}$ for some $\delta>0$ with a unique pole on the line $\{s\in\mb{C}:\Re(s)=\alpha^*_{S,K}\}$ (at the point $s=\alpha^*_{S,K}$), it decomposes as an Euler product $\zeta_{S,K}^*(s)=\prod_{p\ \textrm{prime}} \zeta_{S,K,p}^*(s)$ and each $\zeta_{S,K,p}^*(s)$ is a quotient of polynomials in $p^{-s}$ with rational coefficients of bounded degrees (i.e.\  bounded by a number not depending on $p$). The multiset  $\{\alpha^*_{S,K}:K\in \mathscr{F}_F^*\}$ depends only on the $\mb{Q}$-Mal'cev completion of $S$.
\end{enumerate}
\end{thm}

\subsection*{Acknowledgment} The author wishes to thank the referee for many useful
suggestions that helped to improve the exposition of this paper.

\section{Mal'cev completions of virtually nilpotent groups}
\noindent In this section we define the $R$-Mal'cev completion of a virtually $\tau$-group for any binomial domain $R$. First we give some preliminaries about $R$-Mal'cev completions of $\tau$-groups and some others on group extensions. Then we prove existence and uniqueness (up to isomorphism) of the $R$-Mal'cev completion of a virtually $\tau$-group. 

\subsection{Preliminaries.}

Let $R$ be a binomial domain, that is, an integral domain of characteristic zero such that, for any $r\in R$ and any $k\in\mb{N}$, the binomial coefficient $\binom{r}{k}=\frac{r(r-1)\ldots (r-k+1)}{k!}$ lies in $R$. Examples are the ring of integers $\mb{Z}$, the ring of $p$-adic numbers $\mb{Z}_p$ for any rational prime $p$, and any field of characteristic zero. A nilpotent $R$-powered group  is a nilpotent group $\mathfrak{N}$ where for all $n\in \mathfrak{N}$ and $r\in R$  an element $n^r\in \mathfrak{N}$ has been defined such that the following hold:
\begin{itemize}
\item[(i)] $n^1=n$; $n^{r_1+r_2}=n^{r_1}n^{r_2}$; $n^{r_1r_2}=(n^{r_1})^{r_2}$ for all $n\in\mathfrak{N}$, $r_1,r_2\in R$.
\item[(ii)] $m^{-1}n^rm=(m^{-1}nm)^r$ for all $m,n\in \mathfrak{N}$, $r\in R$.
\item[(iii)] The Hall-Petresco formula holds for all $k$-tuples $(n_1,\ldots,n_k)$ in $\mathfrak{N}$ and $r\in R$ \cite[Chap.\ 6]{W}.
\end{itemize}
A morphism $\varphi:\mathfrak{M}\rightarrow \mathfrak{N}$ between nilpotent $R$-powered groups is a group homomorphism such that $\varphi(m^r)=\varphi(m)^r$ for all $m\in \mathfrak{M}$, $r\in R$ and it will be called an $R$-morphism. More about these groups can be found in \cite[Chap.\ 10]{W}.

If $N$ is a $\tau$-group with Mal'cev basis $\{x_1,\ldots,x_h\}$ then any element $x\in N$ can be written uniquely in the form $x=x_1^{a_1}\ldots x_h^{a_h}$ with $a_1,\ldots,a_h\in\mb{Z}$. By \cite[Theorem 6.5]{H}, for each $i=1,\ldots,h$, there exist functions $f_i(\a,\b)=f_i(a_1,\ldots,a_h,b_1,\ldots,b_h)$ and $g_i(\a,r)=g_i(a_1,\ldots,a_h,r)$, which are polynomials in $\binom{a_1}{k_1},\ldots,\binom{a_h}{k_h},\binom{b_1}{l_1},\ldots,\binom{b_h}{l_h},r$, with $k_j,l_j\in\mb{N}$, with integer coefficients and such that
\begin{align}\label{multiplicacion en t-grupos}
x_1^{a_1}\ldots x_h^{a_h}x_1^{b_1}\ldots x_h^{b_h}&=x_1^{f_1(\a,\b)}\ldots x_h^{f_h(\a,\b)}\\
\label{exponenciacion en t-grupos} (x_1^{a_1}\ldots x_h^{a_h})^r&=x_1^{g_1(\a,r)}\ldots x_h^{g_h(\a,r)}.
\end{align}
Following \cite[Page 48]{H}, a nilpotent $R$-powered group $N^R$ can be obtained from $N$ as follows: the elements of $N^R$ are all the formal products $x_1^{r_1}\ldots x_h^{r_h}$ with $r_1,\ldots,r_h\in R$, and the multiplication and exponentiation in $N^R$ are defined by means of the polynomials $f_i(\a,\b)$ and $g_i(\a,r)$ as in (\ref{multiplicacion en t-grupos}) and (\ref{exponenciacion en t-grupos}). Note that this makes sense because $R$ is a binomial domain.
  
The group $N^R$ and the obvious injection $i_N:N\rightarrow  N^R$ satisfy the following universal property: If $\mathfrak{M}$ is a nilpotent $R$-powered group and $f:N\rightarrow \mathfrak{M}$ is a morphism of groups, then there exists a unique $R$-morphism  $\tilde{f}:N^R\rightarrow \mathfrak{M}$ such that $\tilde{f}i_N=f$ \cite[Theorem 10.14]{W}. In particular $N\mapsto N^R$ defines a functor from the category of $\tau$-groups to the category of nilpotent $R$-powered groups: if $f:M\rightarrow N$ is a morphism of $\tau$-groups, $f^R:M^R\rightarrow N^R$ is the morphism of nilpotent $R$-powered groups such that $f^Ri_N=i_Mf$. We call $f^R$ the $R$-extension of $f$. Finally, any pair $(i,\mathfrak{N})$ satisfying the  universal property of $(i_N,N^R)$ will be called an \emph{$R$-Mal'cev completion of $N$}; of course, they are all canonically isomorphic.

Now we recall some facts about group extensions. Let $S:\SES$ be a short exact sequence of groups. Any set $T=\{g_f\in G:f\in F, g_1=1, \pi(g_f)=f\}\subseteq G$ defines a pair of maps $\sigma: F\rightarrow \opn{Aut}(N)$ and $\psi:F\times F\rightarrow N$, where $\sigma(f)\in \opn{Aut}(N)$ is the unique automorphism such that $\iota(\sigma(f)(n))=g_f\iota(n)g_f^{-1}$, and $\psi(f,f')$ is the unique element of $N$ such that $g_fg_{f'}=\iota(\psi(f,f'))g_{ff'}$. Note that multiplication in $G$ is given by $(\iota(n)g_f)(\iota(n')g_{f'})=\iota(n\sigma(f)(n')\psi(f,f'))g_{ff'}$. The pair $(\sigma,\psi)$ will be called the cocycle associated to $T$ because it satisfies the following \emph{cocycle conditions}.
\begin{enumerate}\label{cocycle conditions}
\item $\sigma(f)\sigma(f')=\mu(\psi(f,f'))\sigma(ff')$ $\forall f,f'\in F$ ,
\item  $\psi(f,f')\psi(ff',f'')=\sigma(f)(\psi(f,f'))\psi(f,f'f'')$ $\forall f,f',f''\in F$,
\end{enumerate}
where for $n\in N$, $\mu(n)\in \opn{Aut}(N)$ denotes conjugation by $n$. The set $N\times F$ can be given the structure of a group by defining $(n,f)*_{(\sigma,\psi)} (n',f')=(n\sigma(f)(n')\psi(f,f'),ff')$, and the map $j:G\rightarrow N\times F$ sending $\iota(n)g_f$ to $(n,f)$ is an isomorphism. Moreover, we have an exact sequence $S_{(\sigma,\psi)}:1\rightarrow N\rightarrow N\times F\rightarrow F\rightarrow 1$, where the first map is $n\mapsto (n,1)$ and the second map is the projection $(n,f)\mapsto f$. Clearly $(id_N,j,id_F):S\rightarrow S_{(\sigma,\psi)}$ is an isomorphism of short exact sequences. 

Finally, given groups $N$, $F$ and a couple of maps  $\sigma: F\rightarrow \opn{Aut}(N)$ and $\psi:F\times F\rightarrow N$ such that $\sigma(1)=id_N$ and $\psi(f,1)=\psi(1,f)=1$ for all $f\in F$, if $(\sigma,\psi)$ satisfies the cocycle conditions stated above then we can define a group structure on $N\times F$ and a short exact sequence $S_{(\sigma,\psi)}$ just as we did in the last paragraph. We denote the group $N\times F$ with this operation by $N\times_{(\sigma,\psi)}F$.

\subsection{The $R$-Mal'cev completion of a virtually $\tau$-group.} We keep the notation of the last subsection. By a virtually nilpotent $R$-powered group we mean a short exact sequence of groups $\mathfrak{S}:1\rightarrow \mathfrak{N}\rightarrow \mathfrak{G}\rightarrow\mathfrak{F}\rightarrow 1$ where
\begin{enumerate}[a)]
\item  $\mathfrak{N}$ is a nilpotent $R$-powered group,
\item $\mathfrak{F}$ is a finite group, and
\item  for any $g\in\mathfrak{G}$ the automorphism of $\mathfrak{N}$ induced by conjugation by $g$ is an $R$-morphism.
\end{enumerate}
 The morphisms of virtually nilpotent $R$-powered groups, also called $R$-morphisms, are the morphisms of short exact sequences $(\alpha,\beta,\gamma):\mathfrak{S}\rightarrow\mathfrak{S}'$ such that $\alpha$ is an $R$-morphism. An \emph{$R$-Mal'cev completion of a virtually $\tau$-group} $S:\SES$ is a virtually nilpotent $R$-powered group $\mathfrak{S}:1\rightarrow \mathfrak{N}\xrightarrow{\iota_\mathfrak{S}} \mathfrak{G}\xrightarrow{\pi_\mathfrak{S}} \mathfrak{F}\rightarrow 1$ with a morphism $(i,j,k):S\rightarrow \mathfrak{S}$ of short exact sequences such that $i:N\rightarrow \mathfrak{N}$ is an $R$-Mal'cev completion of $N$ and $k:F\rightarrow \mathfrak{F}$ is an isomorphism.
Note that $j$ must be injective and $\mathfrak{G}=\iota_\mathfrak{S}(\mathfrak{N})j(G)$.
\begin{proposition}\label{Existence of the Malcev completion} An $R$-Mal'cev completion of a virtually $\tau$-group always exists. 
\end{proposition}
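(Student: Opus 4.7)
The plan is to build $S^R$ by a factor-set construction, transferring the extension data defining $G$ as an extension of $F$ by $N$ to the corresponding data for $N^R$ via the functoriality of the Mal'cev completion of $\tau$-groups.

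First I would fix a set-theoretic section $s:F\to G$ of $\pi$ with $s(1_F)=1_G$ and extract the associated factor set: put $\phi(f):=\opn{Inn}(s(f))|_N\in\opn{Aut}(N)$ and $\omega(f_1,f_2):=s(f_1)s(f_2)s(f_1f_2)^{-1}\in N$. Associativity in $G$ is equivalent to the standard compatibility relations
\begin{align*}
\phi(f_1)\phi(f_2)&=\opn{Inn}(\omega(f_1,f_2))\,\phi(f_1f_2),\\
\omega(f_1,f_2)\,\omega(f_1f_2,f_3)&=\phi(f_1)(\omega(f_2,f_3))\,\omega(f_1,f_2f_3),
\end{align*}
and conversely these relations reconstruct $G$ from the quadruple $(N,F,\phi,\omega)$.

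Next I would invoke the (classical) existence of the $R$-Mal'cev completion $i:N\to N^R$ of the $\tau$-group $N$, whose universal property ensures that every automorphism of $N$ extends uniquely to an automorphism of $N^R$. Consequently $\phi$ lifts to a map $\phi^R:F\to\opn{Aut}(N^R)$ satisfying $\phi^R(f)\circ i=i\circ\phi(f)$, and I would put $\omega^R:=i\circ\omega$. I would then define $G^R$ to be the set $N^R\times F$ with multiplication
\begin{align*}
(n,f)\cdot(n',f'):=\bigl(n\cdot\phi^R(f)(n')\cdot\omega^R(f,f'),\ ff'\bigr),
\end{align*}
together with $\iota^R(m):=(m,1_F)$, $\pi^R(n,f):=f$, $k:=\opn{id}_F$, and $j(ns(f)):=(i(n),f)$. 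Once $G^R$ is shown to be a group, the sequence $S^R:1\to N^R\xrightarrow{\iota^R}G^R\xrightarrow{\pi^R}F\to 1$ is manifestly exact; injectivity of $j$, the commutativity of $(i,j,k)$ as a morphism of short exact sequences, and the equality $G^R=\iota^R(N^R)j(G)$ all follow directly from the explicit formulae.

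The step I expect to be the main obstacle is verifying that the two compatibility relations above persist after $(\phi,\omega)$ is replaced by $(\phi^R,\omega^R)$, since this is precisely what makes the proposed multiplication on $G^R$ associative. The second relation transfers by simply applying $i$; the first is an equation of automorphisms of $N^R$, and both sides agree on $N$ after extending the corresponding relation in $\opn{Aut}(N)$ via $i$, noting that $\opn{Inn}(\omega^R(f_1,f_2))$ is the unique extension of $\opn{Inn}(\omega(f_1,f_2))$. The uniqueness clause in the universal property of $i:N\to N^R$ then forces equality throughout $N^R$. With this conceptual point settled, associativity, identity, inverses, and the verification of the universal property required of a Mal'cev completion become routine.
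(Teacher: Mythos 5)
Your proposal is correct and follows essentially the same route as the paper: choose a section of $\pi$, extract the associated pair (automorphisms by conjugation, factor set), extend the automorphisms to $N^R$ by the universal property of the Mal'cev completion of $\tau$-groups, push the factor set forward along $i$, and endow $N^R\times F$ with exactly the same twisted multiplication. The only difference is that you spell out the verification that the lifted pair still satisfies the cocycle compatibility relations (via uniqueness of automorphism extensions), a point the paper dismisses as easy.
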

\begin{proof}
Given a virtually $\tau$-group $S:\SES$, consider the $R$-Mal'cev completion $i_N:N\rightarrow N^R$ of $N$. Choose a set $T=\{g_f\in G:f\in F, g_1=1, \pi(g_f)=f\}\subseteq G$ and let $(\sigma,\psi)$ be the associated cocycle. For $f\in F$, we put $\tilde{\sigma}(f)=(\sigma(f))^R$ ($R$-extension of $\sigma(f)$) and $\tilde{\psi}=i_N\psi$. It is easy to see that $(\tilde{\sigma},\tilde{\psi})$ satisfies the cocycle conditions. In fact, the second cocycle condition holds because it already holds for the pair $(\sigma,\psi)$, and to check the first cocycle condition for given $f,f'\in F$, we note that $\tilde{\sigma}(f)\tilde{\sigma}(f')$ and $\mu(\tilde{\psi}(f,f'))\tilde{\sigma}(ff')$ are $R$-extensions of $\sigma(f)\sigma(f')=\mu(\psi(f,f'))\sigma(ff')$ and therefore they are equal.

We obtain a group $G_T^R=N^R\times_{(\tilde{\sigma},\tilde{\psi})}F$ and a natural short exact sequence $1\rightarrow N^R\rightarrow G^R_T\rightarrow F\rightarrow 1$. Defining $j:G\rightarrow G^R_T$ by $j(\iota(n)g_f)=(i_N(n),f)$  it is easy to check that $(i_N,j,id_F)$ is an $R$-Mal'cev completion of $S$.
\end{proof}

Our next task is to prove uniqueness of the $R$-Mal'cev completion up to a canonical isomorphism by showing that any $R$-Mal'cev completion solves a universal problem.
Suppose we are given a virtually nilpotent group $\SES$ with an $R$-Mal'cev completion $(i,j,k):S\rightarrow \mathfrak{S}$, where $\mathfrak{S}:1\rightarrow \mathfrak{N}\xrightarrow{\iota_\mathfrak{S}} \mathfrak{G}\xrightarrow{\pi_\mathfrak{S}} \mathfrak{F}\rightarrow 1$. Choose a subset $T=\{g_f\in G:f\in F,\pi(f)=f, g_1=1\}\subseteq G$, let $(\sigma,\psi)$ be the associated cocycle and let $(\tilde{\sigma},\tilde{\psi})$ be defined as in the proof of Proposition \ref{Existence of the Malcev completion} (replacing $N^R$ by $\mathfrak{N}$). It is easy to see that $(\sigma_\mathfrak{S},\psi_\mathfrak{S})=(\tilde{\sigma} k^{-1},\tilde{\psi}(k^{-1},k^{-1}))$ is the cocycle of $\mathfrak{S}$ associated to $T_\mathfrak{S}=j(T)$, and clearly $\sigma_\mathfrak{S}(\pi_\mathfrak{S}(j(g_f)))=\tilde{\sigma}(f)$ and $\psi_\mathfrak{S}(\pi_\mathfrak{S}(j(g_f)),\pi_\mathfrak{S}(j(g_{f'})))=\tilde{\psi}(f,f')$ for all $f,f'\in F$. We shall make use of these facts without mention in the proof of the next proposition.
\begin{proposition}\label{functoriality property of the Malcev completion}
Let $(i,j,k):S\rightarrow\mathfrak{S}$ be an $R$-Mal'cev completion of the virtually $\tau$-group $S$, where $S:\SES$ and $\mathfrak{S}:1\rightarrow \mathfrak{N}\xrightarrow{\iota_\mathfrak{S}} \mathfrak{G}\xrightarrow{\pi_\mathfrak{S}} \mathfrak{F}\rightarrow 1$. If $\mathfrak{T}:1\rightarrow \mathfrak{M}\xrightarrow{\iota_\mathfrak{T}}\mathfrak{H}\xrightarrow{\pi_\mathfrak{T}}\mathfrak{K}\rightarrow 1$ is another virtually nilpotent $R$-powered group and $(\alpha,\beta,\gamma):S\rightarrow\mathfrak{T}$ is a morphism of short exact sequences of groups, then there exists a unique morphism of virtually nilpotent $R$-powered groups $(\tilde{\alpha},\tilde{\beta},\tilde{\gamma}):\mathfrak{S}\rightarrow\mathfrak{T}$ such that $(\alpha,\beta,\gamma)=(\tilde{\alpha},\tilde{\beta},\tilde{\gamma})\circ (i,j,k)$.
\end{proposition}
\begin{proof}
Since $i:N\rightarrow\mathfrak{N}$ is an $R$-Mal'cev completion of $N$ there exists a unique morphism of nilpotent $R$-powered groups $\tilde{\alpha}:\mathfrak{N}\rightarrow\mathfrak{M}$ such that $\tilde{\alpha} i=\alpha$; since $k$ is an isomorphism of groups there exists a unique morphism of groups $\tilde{\gamma}:\mathfrak{F}\rightarrow\mathfrak{K}$ such that $\tilde{\gamma}k=\gamma$.  Choose a subset $T=\{g_f\in G:f\in F_1,\pi(g_f)=f, g_1=1\}\subseteq G$, let $(\sigma,\psi)$ be the associated cocycle and let $(\tilde{\sigma},\tilde{\psi})$ and $(\sigma_\mathfrak{S},\psi_\mathfrak{S})$ be defined as in the paragraph before the proposition.
 Since $\mathfrak{G}=\iota_\mathfrak{S}(\mathfrak{N})j(G)$,  to define $\tilde{\beta}$ in order to satisfy the conditions $\tilde{\beta} j=\beta$ and $\tilde{\beta}\iota_\mathfrak{S}=\iota_\mathfrak{T}\tilde{\alpha}$ we are forced to do it as 
\begin{align*}
\tilde{\beta}(\iota_\mathfrak{S}(n)j(g_f))=\iota_\mathfrak{T}(\tilde{\alpha}(n))\beta(g_f).
\end{align*}
Let us prove that $\tilde{\beta}$, defined in this way, is a morphism of groups. On the one hand
\begin{align}
&\tilde{\beta}(\iota_\mathfrak{S}(n)j(g_f)\iota_\mathfrak{S}(n')j(g_{f'}))\nonumber \\ 
&=\tilde{\beta}(\iota_\mathfrak{S}(n\sigma_\mathfrak{S}(\pi_\mathfrak{S}(j(g_f)))(n')\psi_\mathfrak{S}(\pi_\mathfrak{S}(j(g_f)),\pi_\mathfrak{S}(j(g_{f'}))))j(g_{ff'}))\nonumber \\
&=\tilde{\beta}(\iota_\mathfrak{S}(n\tilde{\sigma}(f)(n')\tilde{\psi}(f,f'))j(g_{ff'}))\nonumber \\
&=\iota_\mathfrak{T}(\tilde{\alpha}(n\tilde{\sigma}(f)(n')\tilde{\psi}(f,f')))\beta(g_{ff'})\nonumber \\
&=\iota_\mathfrak{T}(\tilde{\alpha}(n)\tilde{\alpha}(\tilde{\sigma}(f)(n'))\tilde{\alpha}(\tilde{\psi}(f,f')))\beta(g_{ff'}).\label{LHS}
\end{align}
On the other hand  
\begin{align*}
&\tilde{\beta}(\iota_\mathfrak{S}(n)j(g_f))\tilde{\beta}(\iota_\mathfrak{S}(n')j(g_{f'}))\\
&=\iota_\mathfrak{T}(\tilde{\alpha}(n))\beta(g_f) \iota_\mathfrak{T}(\tilde{\alpha}(n'))\beta(g_{f'})\\
&=\iota_\mathfrak{T}(\tilde{\alpha}(n)\tau_{\beta(g_f)}(\tilde{\alpha}(n')))\beta(g_f)\beta(g_{f'}),
\end{align*}
where $\tau_{\beta(g_f)}\in \opn{Aut}_R(\mathfrak{M})$ is the automorphism such that $\iota_\mathfrak{T}(\tau_{\beta(g_f)}(m))=\beta(g_f)\iota_\mathfrak{T}(m)(\beta(g_f))^{-1}$; since $\beta(g_f)\beta(g_{f'})=\beta(g_{f}g_{f'})=\beta(\iota(\psi(f,f'))g_{ff'})
=\iota_\mathfrak{T}(\tilde{\alpha}(i(\psi(f,f'))))\beta(g_{ff'})$, our last expression becomes
\begin{align}\label{RHS}
\iota_\mathfrak{T}(\tilde{\alpha}(n)\tau_{\beta(g_f)}(\tilde{\alpha}(n'))\tilde{\alpha}(\tilde{\psi}(f,f')))\beta(g_{ff'}).   
\end{align}
Looking at (\ref{LHS}) and (\ref{RHS}) we see that we only need to check that the morphisms $\iota_\mathfrak{T}\tilde{\alpha}\tilde{\sigma}(f)$ and $\iota_\mathfrak{T}\tau_{\beta(g_f)}\tilde{\alpha}$  from $\mathfrak{N}$ to $\iota_\mathfrak{T}(\mathfrak{M})$ are equal. Since they are clearly $R$-morphisms  it is enough to check that they are both extension to $\mathfrak{N}$  of the map $N\rightarrow \iota_\mathfrak{R}(\mathfrak{M})$ given by $n\mapsto \beta(g_f\iota(n)g_f^{-1})$. 
For the first map we have
\begin{align*}
\iota_\mathfrak{T}(\tilde{\alpha}(\tilde{\sigma}(f)(i(n))))&=\iota_\mathfrak{T}(\tilde{\alpha}(i(\sigma(f)(n))))=\iota_\mathfrak{T}(\alpha(\sigma(f)(n)))\\
&=\beta(\iota(\sigma(f)(n)))=\beta(g_f\iota(n)g_f^{-1}).
\end{align*}
For the second map we have
\begin{align*}
\iota_\mathfrak{T}(\tau_{\beta(g_f)}(\tilde{\alpha}(i(n))))&=\beta(g_f)\iota_\mathfrak{T}(\tilde{\alpha}(i(n)))\beta(g_f^{-1})\\
&=\beta(g_f)\iota_\mathfrak{T}(\alpha(n)) \beta(g_f^{-1})\\
&=\beta(g_f)\beta(\iota(n))\beta(g_f^{-1})\\
&=\beta(g_f\iota_1(n)g_f^{-1}).
\end{align*}
We conclude that $\tilde{\beta}$ is a morphism and by definition it is the unique morphism satisfying $\tilde{\beta}\iota_\mathfrak{S}=\iota_\mathfrak{T}\tilde{\alpha}$ and $\tilde{\beta}j=\beta$. We still have to verify that $\pi_\mathfrak{T}\tilde{\beta}=\tilde{\gamma}\pi_\mathfrak{S}$. This follows from
\begin{align*}
\pi_\mathfrak{T}(\tilde{\beta}(\iota_\mathfrak{S}(n)j(g_f)))&=\pi_\mathfrak{T}(\iota_\mathfrak{T}(\tilde{\alpha}(n))\beta(g_f))=
\pi_\mathfrak{T}(\beta(g_f))=\gamma(f)=\tilde{\gamma}(k(f))\\
&=\tilde{\gamma}(\pi_\mathfrak{S}(j(g_f)))=\tilde{\gamma}\pi_\mathfrak{S}(\iota(n)j(g_f)), \end{align*}
\end{proof}
\begin{corollary}\label{uniqueness of the Malcev completion} If $(i_1,j_1,k_1):S\rightarrow \mathfrak{S}_1$ and $(i_2,j_2,k_2):S\rightarrow \mathfrak{S}_2$ are $R$-Mal'cev completions of a virtually $\tau$-group $S$, then there exists a unique $R$-isomorphism $(u,v,w):\mathfrak{S}_1\rightarrow \mathfrak{S}_2$ of such that $(u i_1,v j_1,w k_1)=(i_2,j_2,k_2)$.
\end{corollary}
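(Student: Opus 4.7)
The plan is to deduce the corollary from Proposition \ref{functoriality property of the Malcev completion} applied to the identity morphism $S\to S$, in the spirit of the standard ``unique up to unique isomorphism'' argument attached to any universal construction equipped with a functoriality statement. Concretely, I would apply that proposition to the triple $(\mathrm{id}_N,\mathrm{id}_G,\mathrm{id}_F): S\to S$, taking the source completion to be $(i_1,j_1,k_1)$ and the target completion to be $(i_2,j_2,k_2)$. This produces a unique morphism of short exact sequences $(\alpha^R,\beta^R,\gamma^R): S_1^R\to S_2^R$ such that $\alpha^R i_1 = i_2$, $\beta^R j_1 = j_2$ and $\gamma^R k_1 = k_2$, which is exactly the compatibility required by the statement.

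To upgrade this morphism to an isomorphism, I would reverse the roles of the two completions and apply Proposition \ref{functoriality property of the Malcev completion} once more to the same identity morphism of $S$, obtaining a morphism $(\alpha'^R,\beta'^R,\gamma'^R): S_2^R\to S_1^R$ with $\alpha'^R i_2 = i_1$, $\beta'^R j_2 = j_1$ and $\gamma'^R k_2 = k_1$. The composition $(\alpha'^R\alpha^R,\beta'^R\beta^R,\gamma'^R\gamma^R): S_1^R\to S_1^R$ then lifts the identity of $S$ with respect to the completion $(i_1,j_1,k_1)$; but the identity morphism of $S_1^R$ is another such lift. The uniqueness clause in Proposition \ref{functoriality property of the Malcev completion} forces these two lifts to coincide, and the symmetric argument shows that $(\alpha^R\alpha'^R,\beta^R\beta'^R,\gamma^R\gamma'^R)$ is the identity of $S_2^R$. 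Hence $(\alpha^R,\beta^R,\gamma^R)$ is an isomorphism, and its uniqueness as a morphism with the prescribed compatibility is already built into the conclusion of Proposition \ref{functoriality property of the Malcev completion}.

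No genuine obstacle arises: this is the formal corollary that a universal construction equipped with a functoriality statement of this type is unique up to unique isomorphism. The only point requiring any care is verifying that the identity morphism of $S$ falls within the hypotheses of Proposition \ref{functoriality property of the Malcev completion}, which is immediate since $(\mathrm{id}_N,\mathrm{id}_G,\mathrm{id}_F)$ is a morphism of short exact sequences in the trivial way.
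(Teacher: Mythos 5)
Your proposal is correct and follows essentially the same route as the paper, which proves the corollary by applying Proposition \ref{functoriality property of the Malcev completion} to the identity morphism of $S$. Your additional two-sided argument (composing the two induced morphisms and invoking uniqueness against the identity lift) is exactly the standard way to make explicit the isomorphism claim the paper leaves implicit.
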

\begin{proof}
This follows from Proposition \ref{functoriality property of the Malcev completion} with $\alpha=i_2,\beta=j_2$ and $\gamma=i_3$.
\end{proof}
This allows us to talk about \emph{the} $R$-Mal'cev completion of a virtually $\tau$-group. Moreover, as a formal consequence of Proposition \ref{functoriality property of the Malcev completion} we also obtain
\begin{corollary}\label{functoriallity property of the Malcev completion, corollary} The $R$-Mal'cev completion defines a functor from the category of virtually $\tau$-groups to the category of virtually nilpotent $R$-powered groups.
\end{corollary}

\section{Expression of zeta functions as Euler products of cone integrals}
\noindent The main task of this section is the proof of Theorem 1. First, we will reformulate this theorem (see Theorem \ref{main theorem}) and  after several steps we will prove the new version. As a result of one of these steps, we will obtain Corollary 1 (see Corollary \ref{Algoritmo,corollary 2}). Then, after establishing some consequences of Theorem 1, we will make use of some facts about Mal'cev completions of virtually $\tau$-groups that we have proved in the last section to give a proof of Theorem 2. At the end of the section we shall give the proof of Theorem 3.  
\subsection{Decomposition of $\zeta_{S,K}^*(s)$ as an Euler product}
Let $S:\SES$ be a virtually $\tau$-group and let $\widehat{\mathbb{Z}}$ be the profinite completion of $\mb{Z}$, which is a binomial domain. The $\widehat{\mb{Z}}$-Mal'cev completion of $S$ is just $(i,j,id_F):S\rightarrow \widehat{S} \ (=1\rightarrow\widehat{N}\xrightarrow{\widehat{\iota}}\widehat{G}\xrightarrow{\widehat{\pi}}\widehat{F}\rightarrow 1)$, where $i:N\rightarrow \widehat{N}$, $j:G\rightarrow\widehat{G}$ and $id_F:F\rightarrow \widehat{F}$ are the profinite completions of $N$, $G$ and $F$ respectively. We shall refer to $S\rightarrow \widehat{S}$ as the profinite completion of $S$. For simplicity, we assume that $N,G$ and $\widehat{N}$ are all subgroups of $\widehat{G}$ and $i,j,\iota$ and $\widehat{\iota}$ are inclusion maps. This will allow us, for example, to write intersections of subgroups of $G$ with $N$ instead of writing pre-images by the map $\iota$.

Every finite index subgroup of $\widehat{G}$ is open because $G$ is finitely generated \cite{NS}, therefore the usual correspondence between finite index subgroups of a group and open subgroups of its profinite completion ($A\mapsto \overline {A}$, where $\overline{A}$ is the topological closure of $A$ in $\widehat{G}$ ) gives an isomorphism of lattices $\mathscr{F}_G^*\rightarrow\mathscr{F}_{\widehat{G}}^*$ preserving the relative index ($[\overline{B}:\overline{A}]=[B:A]$ if $A\leq B$), and such that the composition $\mathscr{F}_G^*\rightarrow\mathscr{F}_{\widehat{G}}^*\xrightarrow{\widehat{\pi}}
\mathscr{F}_F^*$ coincides with the map $\mathscr{F}_{G}^*\xrightarrow{\pi}\mathscr{F}_F^*$. These facts imply that for $K\in\mathscr{F}_F^*$, $A\in\mathscr{F}_{S,K}^*$ if and only if $\overline{A}\in\mathscr{F}_{\widehat{S},K}^*$ and, for each prime $p$, $A\in\mathscr{F}_{S,K,p}^*$ if and only if $\overline{A}\in\mathscr{F}_{\widehat{S},K,p}^*$. We conclude
\begin{proposition}\label{equality between zeta function of a group and that of its profinite completion}
Let $S$ be a virtually $\tau$-group with profinite completion $\widehat{S}$. Then for each $K\in\mathscr{F}_F^*$ we have $\zeta_{S,K}^*(s)=\zeta_{\widehat{S},K}^*(s)$ and, for each prime $p$, $\zeta_{S,K,p}^*(s)=\zeta_{\widehat{S},K,p}^*(s)$.
\end{proposition}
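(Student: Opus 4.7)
The plan is to show that the lattice isomorphism $\Phi : \mathscr{F}_G^* \to \mathscr{F}_{\widehat{G}}^*$, $A \mapsto \overline{A}$, recalled in the paragraph preceding the proposition, restricts to a bijection $\mathscr{F}_{S,K}^* \to \mathscr{F}_{\widehat{S},K}^*$ which preserves the summand $[\pi^{-1}(K):A]^{-s}$; the two Dirichlet series will then be equal termwise. Since $F$ is finite we have $\widehat{F}=F$, and the compatibility already recorded, namely $\widehat{\pi} \circ \Phi = \pi$ on $\mathscr{F}_G^*$, immediately gives that $\pi(A)=K$ if and only if $\widehat{\pi}(\overline{A})=K$. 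Thus $\Phi$ does restrict to a bijection between $\mathscr{F}_{S,K}^*$ and $\mathscr{F}_{\widehat{S},K}^*$ in the case $*=\leq$.

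For the case $*=\lhd$ I would note that $\Phi$ also preserves normality: if $A\lhd G$ then $\overline{A}$ is normal in $\widehat{G}$ because conjugation in $\widehat{G}$ is continuous and $G$ is dense; conversely if $B\lhd\widehat{G}$ has finite index, then $B\cap G\lhd G$ has finite index, and the general correspondence shows $\overline{B\cap G}=B$. Hence the restricted map is still a bijection in the normal case.

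Next I would verify that the index $[\pi^{-1}(K):A]$ is preserved. The subgroup $\pi^{-1}(K)\leq G$ has finite index (equal to $[F:K]$), so its image under $\Phi$ is a well-defined open subgroup of $\widehat{G}$. Using the equality $\widehat{\pi}\circ\Phi=\pi$ together with the fact that $\Phi$ is a lattice isomorphism, one obtains $\Phi(\pi^{-1}(K))=\widehat{\pi}^{-1}(K)$. Since $\Phi$ preserves relative indices,
\[
[\pi^{-1}(K):A]=[\overline{\pi^{-1}(K)}:\overline{A}]=[\widehat{\pi}^{-1}(K):\overline{A}].
\]
Plugging this equality termwise into the defining sums (\ref{relative zeta function}) yields $\zeta_{S,K}^*(s)=\zeta_{\widehat{S},K}^*(s)$.

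There is no real obstacle here beyond being careful about the two bookkeeping checks above (preservation of the condition $\pi(A)=K$ and of the index $[\pi^{-1}(K):A]$); the heavy lifting, namely that every finite index subgroup of $\widehat{G}$ is open so that the correspondence $A\mapsto\overline{A}$ is bijective on the whole of $\mathscr{F}_G^*$, is already supplied by the Nikolov--Segal theorem cited in the paragraph preceding the proposition.
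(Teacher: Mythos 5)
Your proof is correct and follows the same route the paper takes: the paper states the proposition as an immediate consequence of the lattice isomorphism $A\mapsto\overline{A}$ (bijectivity via Nikolov--Segal, preservation of relative index, and the compatibility $\widehat{\pi}\circ\Phi=\pi$) recorded in the preceding paragraph, and your write-up simply spells out those bookkeeping steps in full.
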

For a prime $p$, let $\mb{Z}_p$ be the ring of $p$-adic integers. Write $\widehat{N}=\prod_{p} \widehat{N}_p$, where $\widehat{N}_p$ is the pro-$p$ Sylow subgroup of $\widehat{N}$, and for a fixed prime $p$ let  $N_p'=\prod_{q\neq p} \widehat{N}_p$, where the product is taken over all primes $q$ different from $p$. Note that $N_p'$ is a closed characteristic subgroup of $\widehat{N}$,  $\widehat{N}_p=\widehat{N}/N_p'$ is the pro-$p$ completion of $N$ and, therefore, it is also the $\mb{Z}_p$-Mal'cev completion of $N$. Thus $S_p: 1\rightarrow \widehat{N}/N_p'\rightarrow \widehat{G}/N_p'\rightarrow F\rightarrow 1$ is the $\mb{Z}_p$-Mal'cev completion of $S$. We shall use the more comfortable notation $N_p=\widehat{N}_p/N_p'$, $G_p=\widehat{G}/N_p'$ and write
$S_p:1\rightarrow N_p\xrightarrow{\iota_p} G_p\xrightarrow{\pi_p} F\rightarrow 1$.

The following proposition was proved in \cite[Proposition 2.2]{dSMS} under stronger assumptions. The proof here is  essentially the same but we shall include it for the sake of completeness.
\begin{proposition}\label{expression as an Euler product of the relative local zeta functions}
Let $S:\SES$ be a virtually $\tau$-group. Then for each $K\in\mathscr{F}_F^*$ 
  \begin{align}
  \zeta_{S,K}^*(s)=\prod_{p\ \textrm{prime}}\zeta_{S,K,p}^*(s)=\prod_{p\ \textrm{prime}}\zeta_{S_p,K}^*(s).
  \end{align}
\end{proposition}
\begin{proof} By Proposition \ref{equality between zeta function of a group and that of its profinite completion}, to prove the first equality it is enough to show that $\zeta_{\widehat{S},K}^*(s)=\prod_p\zeta_{\widehat{S},K,p}^*(s)$. For $A\in\mathscr{F}^*_{\widehat{S},K}$, we have that $A\in \mathscr{F}_{\widehat{S},K,p}^*$ if and only if $A\cap \widehat{N}$ has index a power of $p$ in $\widehat{N}$, and this is the case if and only if $A\cap \widehat{N}$ contains $N_p'$; therefore, the map $A\mapsto A/N_p'$ gives an isomorphism of lattices between $\mathscr{F}_{\widehat{S},K,p}^*$ and $\mathscr{F}_{S_p,K}^*$, and the second equality holds. 

Since each $N_p'$ is a closed normal subgroup of $\widehat{G}$, it follows that for any $A\in\mathscr{F}^*_{\widehat{S},K}$ we have  $AN_p' \in \mathscr{F}_{\widehat{S},K,p}^*$; therefore, we obtain a map  $\Psi^*:\mathscr{F}_{\widehat{S},K}^*\rightarrow\prod_p\mathscr{F}_{\widehat{S},K,p}^*$ given by $A\rightarrow(AN_p')_p$. Note that given $A\in \mathscr{F}_{\widehat{S},K}^*$, for all but a finite number of primes $p$ one has $AN_p'=\widehat{\pi}^{-1}(K)$.

Given $A\in \mathscr{F}_{\widehat{S},K}^*$, we have $A=\cap_p AN_p'$ because $[\widehat{\pi}^{-1}(K):A]=[\widehat{N}:A\cap\widehat{N}]=\prod_p[\widehat{N}:(A\cap\widehat{N})N_p']=\prod_p[\widehat{N}:AN_p'\cap\widehat{N}]=\prod_p[\widehat{\pi}^{-1}(K):AN_p']=[\widehat{\pi}^{-1}(K):\cap_p AN_p']$.
    Reciprocally, given $A_p\in \mathscr{F}_{\widehat{S},K,p}^*$, where $A_p=\widehat{\pi}^{-1}(K)$ for all but a finite number of primes $p$, then for $A=\cap_p A_p$ we have $[A\widehat{N}:A]=[\widehat{N}:A\cap\widehat{N}]=[\widehat{N}:\cap_p(A_p\cap \widehat{N})]=\prod_p[\widehat{N}:A_p\cap \widehat{N}]=\prod_p[\widehat{\pi}^{-1}(K):A_p]=[\widehat{\pi}^{-1}:\cap_p A_p]$. Therefore $A\widehat{N}=\widehat{\pi}^{-1}(K)$ and since we also obtained $[\widehat{\pi}^{-1}(K):A]=\prod_p[\widehat{\pi}^{-1}(K):AN_p']$ the obvious inclusion $AN_p'\subseteq A_p$ implies $A_p=AN_p'$. 

   We conclude that the map $A\mapsto (AN_p')_p$ gives a bijection between $\mathscr{F}_{\widehat{S},K}^*$ and the set of those $(A_p)_p\in\prod_p\mathscr{F}_{\widehat{S},K,p}^*$ such that $A_p=\widehat{\pi}^{-1}(K)$ for all but a finite number of primes $p$, and moreover $[\widehat{\pi}^{-1}(K):A]=\prod_p[\widehat{\pi}^{-1}(K):AN_p']$. This is exactly the translation of the Euler product we wanted to prove.
\end{proof}

\subsection{Expression of $\zeta_{S,K}^*(s)$ as an Euler product of cone integrals} From now on we shall fix a virtually $\tau$-group $S:\SES$, where $\iota$ is an inclusion map, and let $h=h(N)$ be the Hirsch length of $N$. We shall keep the notation of the last subsection.

\begin{theorem}\label{main theorem} For each $K\in\mathscr{F}_F^*$, there are some cone integral data $\mc{D}^*$ such that, for each prime $p$,
\[\zeta_{S_p,K}^*(s)=(1-p^{-1})^{-h}Z_{\mc{D}^*}(s-h-|K|+1,p).\]
\end{theorem}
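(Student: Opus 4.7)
The plan is to parameterize the subgroups $A$ with $\pi_p(A)=K$ using Mal'cev coordinates and then exhibit the resulting sum as a $p$-adic cone integral. First I would fix a Mal'cev basis $(x_1,\ldots,x_h)$ of $N$ and a set-theoretic section $\{g_f:f\in F\}$ of $\pi$ with $g_1=1$, so that every element of $G_p$ is uniquely written as $x_1^{a_1}\cdots x_h^{a_h}g_f$ with $\mathbf{a}\in\mb{Z}_p^h$ and $f\in F$. The cocycle $\psi(f,f')=g_fg_{f'}g_{ff'}^{-1}\in N$, the conjugation action $\sigma(f)\in\opn{Aut}(N_p)$ of $g_f$, and multiplication in $N_p$ are all polynomial in these coordinates, and the polynomials are defined over $\mb{Q}$ since they are determined by $S$ and not by $p$.

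Next, given $A\in\mathscr{F}_{S_p,K}^*$ I set $B=A\cap N_p$. Then $B$ is a finite-index $K$-stable subgroup of $N_p$ with $[\pi_p^{-1}(K):A]=[N_p:B]$, and $A$ is recovered from $B$ together with a tuple $(n_k)_{k\in K\setminus\{1\}}\in N_p^{|K|-1}$ satisfying the cocycle relation $n_k\cdot\sigma(k)(n_{k'})\cdot\psi(k,k')\cdot n_{kk'}^{-1}\in B$; two tuples give the same $A$ iff they differ by translation in $B^{|K|-1}$, so each $A$ is counted by $[N_p:B]^{|K|-1}$ such tuples. I parameterize $B$ by an upper-triangular matrix $M\in T_h^+(\mb{Z}_p)$ as in the usual $\tau$-group case (this is where the normalization $(1-p^{-1})^{-h}$ and the shift by $h$ originate), and I parameterize the $n_k$ by their Mal'cev coordinates in $\mb{Z}_p^{h(|K|-1)}$. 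Combining $[N_p:B]^{-s}=|\det M|_p^{s}$ with the overcounting factor $[N_p:B]^{|K|-1}=|\det M|_p^{-(|K|-1)}$ produces the total shift $s\mapsto s-h-|K|+1$ predicted by the statement.

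In these coordinates the defining conditions should all become cone conditions: ``$B$ is $K$-stable'' amounts to integrality of $M\sigma(k)M^{-1}$ for each $k\in K$, which is a system of divisibility conditions $\det M\mid P_{i,k}(M)$ for explicit polynomials $P_{i,k}$ coming from the polynomial description of $\sigma(k)$; the cocycle condition becomes integrality of an explicit polynomial expression in $M$, the coordinates of the $n_k$'s, and the entries of $\psi(k,k')$; and (for $*=\lhd$) normality in $G_p$ adds analogous $F$-stability and commutation inequalities involving all of $\sigma(f)$ and $\psi(\cdot,\cdot)$. Packaging all these polynomial valuation inequalities into a cone integral datum $\mc{D}^*$ and observing that the resulting integrand has the form $|f_0|_p^{s-h-|K|+1}|g_0|_p$ on $\mc{M}(\mc{D}^*,p)$ delivers the desired identity $\zeta_{S_p,K}^*(s)=(1-p^{-1})^{-h}Z_{\mc{D}^*}(s-h-|K|+1,p)$.

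The main obstacle will be translating the \emph{existence} of a valid tuple $(n_k)$ — which is a cohomological solvability condition modulo $B$ — into finitely many explicit polynomial valuation inequalities rather than a non-algebraic condition on $B$. Concretely, I expect to have to solve the cocycle equation coordinate by coordinate along the Mal'cev filtration of $N_p$, so that at each stage the obstruction to extending a partial section becomes divisibility of a specific polynomial in $M,\psi$ by $\det M$; only this stepwise linearisation should produce conditions of the form $v_p(f_i)\leq v_p(g_i)$ required for genuine cone integral data.
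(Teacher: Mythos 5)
Your proposal follows essentially the same route as the paper: parameterize each $A$ by an upper-triangular good-basis matrix for $B=A\cap N_p$ together with the Mal'cev coordinates of coset representatives $g_f\x^{\v_f}$, obtain from the measure computation the integral formula with the factor $(1-p^{-1})^{-h}$ and the shift $s\mapsto s-h-|K|+1$, rewrite the $K$-stability, closure/cocycle and (for $*=\lhd$) normality requirements as finitely many membership conditions $\x^{\mathbf{z}(\t,\v)}\in B_\t$, and convert these into divisibility conditions by coordinate-by-coordinate elimination along the Mal'cev basis, which is exactly the paper's Algorithm and its corollaries. One small correction: since the tuple $(n_k)$ is itself an integration variable, no existential (cohomological) solvability condition on $B$ ever has to be encoded — only the closure conditions on the given pair $(\t,\v)$ — so your ``main obstacle'' is precisely the membership-to-divisibility translation the paper carries out (with denominators that are monomials in the diagonal entries $t_{ii}$ rather than $\det M$), applied also to the condition that $\t$ be a good basis for some subgroup, and not a genuinely harder problem.
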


\noindent The method we shall use to express $\zeta_{S_p,K}^*(s)$ as a $p$-adic integral is essentially the same as the one used in \cite{dS} to study zeta functions of compact $p$-adic analytic groups (extensions of uniform pro-$p$ groups by finite groups).

Fix a Mal'cev basis $(x_1,\ldots,x_h)$ for $N$. For each $i=1,\ldots,h$ let us denote by $N^{(i)}$ and $N^{(i)}_p$ respectively the sets $\{x_i^{a_i}\ldots x_h^{a_h}:a_i,\ldots,a_h\in\mb{Z}\}$
and $\{x_i^{a_i}\ldots x_h^{a_h}:a_i,\ldots,a_n\in\mb{Z}_p\}$. By definition of Mal'cev basis, each $N^{(i)}$ (resp.\ $N_p^{(i)}$) is a normal subgroup of $N$ (resp.\ $N_p$), $N^{(i)}/N^{(i+1)}\cong \mb{Z}$ (resp.\ $N_p^{(i)}/N_p^{(i+1)}\cong\mb{Z}_p$); $N^{(i)}_p$ is the closure of $N^{(i)}$ in $N_p$ and it can be identified with the pro-$p$ completion of $N^{(i)}$. Moreover, $(x_i,\ldots,x_h)$ is a Mal'cev basis for $N_p^{(i)}$. 
For an $h$-tuple $\a=(a_1,\ldots,a_h)\in\mb{Z}_p^h$, we write $\x^\a=x_1^{a_1}\ldots x_h^{a_h}\in N_p$.

 For each open subgroup $B\leq N_p$, there exists an upper triangular matrix $\t\in Tr(h,\mb{Z}_p)$ such that $B=\{(\x^{\t_1})^{\lambda_1}\ldots(\x^{\t_h})^{\lambda_h}:\lambda_1,\ldots,\lambda_h\in\mathbb{Z}_p\}$ (here $\t_1,\ldots,\t_h$ are the row vectors of $\t$). We shall say that such a $\t$ represents a \emph{good basis} for $B$. It is easy to see that $B^{(i)}:=B\cap N_p^{(i)}=\{(\x^{\t_i})^{\lambda_i}\ldots (\x^{\t_h})^{\lambda_h}:\lambda_i,\ldots,\lambda_h\in\mb{Z}_p\}$.  Let $\mc{M}_p(B)$ denote the set of all those $\t\in Tr(h,\mb{Z}_p)$ representing a good basis for $B$. The following facts are proved in \cite[Section 2]{GSS}.
 \begin{enumerate}
  \item {$\mc{M}_p(B)$ is an open subset of $Tr(h,\mb{Z}_p)$.}
   \item {For $\t\in \mc{M}_p(B)$, the norm $|t_{ii}|_p$ depends only on $B$.}
    \item {$\mu(\mc{M}_p(B))=(1-p^{-1})^h\prod_{i=1}^h|t_{ii}|_p^i$, where $\mu$ is the Haar measure on $Tr(h,\mb{Z}_p)$ normalized in such a way that $\mu(Tr(h,\mb{Z}_p))=1$.}
    \item {$[N_p:B]=\prod_{i=1}^h|t_{ii}|^{-1}_p$.}
 \end{enumerate}

\noindent Denote by $\mu_{N_p}$ and $\mu_{\mathbb{Z}_p^h}$ the Haar measure on $N_p$ and $\mathbb{Z}_p^h$ respectively, normalized in such a way that $\mu_{N_p}(N_p)=\mu_{\mb{Z}_p^h}(\mb{Z}_p^h)=1$.

 \begin{lemma}\label{medida}  The map $\varphi:\a\mapsto\x^\a$ is a homeomorphism from $\mb{Z}_p^h$ onto $N_p$ which preserves measure, that is, such that $\mu_{N_p}(\varphi(S))=\mu_{\mb{Z}_p^h}(S)$ for every open subset $S$ of $\mb{Z}_p^h$.
 In particular, if  $B$ is an open subgroup of $N_p$ and $x\in N_p$ then $\mu(\{\a\in\mb{Z}_p^h:\x^\a\in xB\})=[N_p:B]^{-1}$.
 \end{lemma}
 \begin{proof} For $x\in N_p$ and an open subgroup $B$ of $N_p$ we clearly have that $\mu_{N_p}(xB)=[N_p:B]^{-1}$, and since these sets $xB$ form a basis for the topology of $N_p$ it is left to prove the last assertion. This will be done by induction on $h$.
 The case $h=1$ being trivial we suppose that $h>1$. Let $x=x_1^{\delta_1}\ldots x_h^{\delta_h}\in N_p$ and $B\leq N_p$ an open subgroup of $N_p$. Choose $\t\in Tr(h,\mb{Z}_p)$ representing a good basis for $B$. Then
 \begin{align*}
 xB&=\bigcup_{\lambda\in\mb{Z}_p}x_1^{\delta_1}\ldots x_h^{\delta_h}(x_1^{t_{11}}\ldots x_h^{t_{hh}})^\lambda B^{(2)}=\bigcup_{\lambda\in\mb{Z}_p}x_1^{\delta_1+\lambda t_{11}}x_2^{p(\lambda)}\ldots x_h^{p_h(\lambda)}B^{(2)},
\end{align*}  
where $p_2(\lambda),\ldots,p_h(\lambda)$ are polynomial functions in $\lambda$ with rational coefficients \cite[Theorem 6.5]{H}. We write $x(\lambda)=x_2^{p(\lambda)}\ldots x_h^{p_h(\lambda)}\in N_p^{(2)}$. Then
\begin{align*}
\varphi^{-1}(xB)&=\{(a_1,\ldots,a_h)\in\mb{Z}_p^h:\exists \lambda\in\mb{Z}_p,  a_1=\delta_1+\lambda t_{11},x_2^{a_2}\ldots x_h^{a_h}\in x(\lambda)B^{(2)}\}\\
&=\{(a_1,\ldots,a_h)\in\mb{Z}_p^h:a_1\in \delta_1+t_{11}\mb{Z}_p,\ x_2^{a_2}\ldots x_h^{a_h}\in x(\frac{a_1-\delta_1}{t_{11}})B^{(2)}\}
\end{align*}
The inductive hypothesis implies that $\mu_{\mb{Z}_p^{h-1}}(\{(a_2,\ldots,a_h)\in\mb{Z}_p^{h-1}:x_2^{a_2}\ldots x_h^{a_h}\in x(\frac{a_1-\delta_1}{t_{11}})B^{(2)}\})=[N_p^{(2)}:B^{(2)}]^{-1}$, and because $\mu_{\mb{Z}_p}(\delta_1+t_{11}\mb{Z}_p)=|t_{11}|_p=[N_p:BN_p^{(2)}]^{-1}$ we conclude that $\mu_{\mb{Z}^h_p}(xB)=[N_p:BN_p^{(2)}]^{-1}[N_p^{(2)}:B\cap N_p^{(2)}]^{-1}=[N_p:B]^{-1}$.
 \end{proof}
 
Now, choose a subset $\{g_f\in G: f\in F, \pi(g_f)=f,g_1=1,g_{f^{-1}}=g_f^{-1}\}\subseteq G$.
For $A\in\mathscr{F}_{S_p,K}$, it is easy to see that there exist elements $n_f\in N_p$, $f\in K$, such that \[ A=(A\cap N_p)\cup\bigcup_{f\in K,\ f\neq 1}g_fn_f(A\cap N_p).\]
This allows us to define $\mc{T}_p(A)$ as the set of all pairs of matrices  $(\t,\v)\in Tr(h,\mb{Z}_p)\times M_{K\setminus\{1\}\times h}(\mb{Z}_p)$ (here $M_{K\setminus\{1\}\times h}(\mb{Z}_p)$ stands for $\prod_{f\in K\setminus\{1\}}\mb{Z}_p^h$) such that $\t$ represents a good basis for $ A\cap N_p$ and $\ds\{1\}\cup\bigcup_{f\in K\setminus\{1\}}\{g_f\x^{\v_f}\}$ is a transversal for the cosets of $A\cap N_p$ in $A$ (here $\v_f\in\mathbb{Z}_p^h$ is the $f$-th row vector of the matrix $\v\in M_{K\setminus\{1\}\times h}(\mb{Z}_p)$).
\begin{lemma}\label{medida de T(A)} $\mc{T}_p(A)$ is an open subset of $Tr(h,\mb{Z}_p)\times M_{K\setminus\{1\}\times h}(\mb{Z}_p)$ with Haar measure
\[\mu(\mc{T}_p(A))=(1-p^{-1})^h\prod_{i=1}^h|t_{ii}|_p^{i+|K|-1}.\]\end{lemma}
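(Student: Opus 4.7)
The plan is to decouple $\mc{T}_p(A)$ as a direct product whose factors come from data already measured in the literature. Set $B:=A\cap N_p$. Using the coset decomposition $A=\bigsqcup_{f\in K}g_fn_fB$ with $n_1=1$ recalled just before the statement, I would first observe that the two defining conditions of $\mc{T}_p(A)$ separate cleanly in the variables $\t$ and $\v$: the requirement that $\t$ represent a good basis for $A\cap N_p$ says precisely $\t\in\mc{M}_p(B)$, and the transversality requirement on $\v$ says that, for each $f\in K\setminus\{1\}$, the element $g_f\x^{\v_f}$ lies in the coset $g_fn_fB$, equivalently $\x^{\v_f}\in n_fB$. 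Moreover the conditions on the different rows $\v_f$ are independent. This yields
\[\mc{T}_p(A)=\mc{M}_p(B)\times\prod_{f\in K\setminus\{1\}}\{\v_f\in\mb{Z}_p^h:\x^{\v_f}\in n_fB\}.\]

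Next I will verify openness and compute the Haar measure factor by factor. The set $\mc{M}_p(B)$ is open with measure $(1-p^{-1})^h\prod_{i=1}^h|t_{ii}|_p^i$ by facts (1) and (3) cited from \cite{GSS}; fact (2) guarantees that the diagonal norms $|t_{ii}|_p$ depend only on $B$ and not on the particular representative $\t\in\mc{M}_p(B)$, which is what allows the answer to be written in the form stated. For each $f$, the set $\{\v_f:\x^{\v_f}\in n_fB\}$ is the preimage of the open coset $n_fB\subseteq N_p$ under the measure-preserving homeomorphism $\varphi:\mb{Z}_p^h\to N_p$, so it is open, and by Lemma \ref{medida} (applied with $x=n_f$) together with fact (4) its Haar measure is $[N_p:B]^{-1}=\prod_{i=1}^h|t_{ii}|_p$.

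Feeding these into the product decomposition via Fubini then gives
\[\mu(\mc{T}_p(A))=(1-p^{-1})^h\prod_{i=1}^h|t_{ii}|_p^{\,i}\cdot\Bigl(\prod_{i=1}^h|t_{ii}|_p\Bigr)^{|K|-1}=(1-p^{-1})^h\prod_{i=1}^h|t_{ii}|_p^{\,i+|K|-1},\]
which is the desired formula. I do not anticipate a serious obstacle; the only point that requires a moment's care is that the representatives $n_f$ are fixed once $A$ is fixed, so the second factor in the decomposition really is a Cartesian product independent of $\t$. This is immediate because $B$ and the partition $A=\bigsqcup_{f\in K}g_fn_fB$ depend only on $A$, not on the good basis chosen for $B$. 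The whole argument is therefore essentially measure-theoretic bookkeeping reducing the count of pairs $(\t,\v)$ to the known measure of $\mc{M}_p(B)$ times one factor of $[N_p:B]^{-1}$ for each non-identity element of $K$.
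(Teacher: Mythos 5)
Your proof is correct and follows essentially the same route as the paper: decompose $\mc{T}_p(A)$ as $\mc{M}_p(A\cap N_p)\times\prod_{f\in K\setminus\{1\}}\varphi^{-1}(n_f(A\cap N_p))$, then apply Lemma \ref{medida} and facts (1)--(4) from \cite{GSS}. The only cosmetic difference is that you spell out the Fubini step and the independence from the choice of good basis, which the paper leaves implicit.
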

\begin{proof} If $A=(A\cap N_p)\cup\bigcup_{f\in K\setminus\{1\}} g_fn_f(A\cap N_p)$ with $n_f\in N_p$, then clearly $(\t,\v)\in \mc{T}_p(A)$ if and only if $\t\in\mc{M}_p(A\cap N_p)$ and $g_f\x^{\v_f}(A\cap N_p)=g_fn_f(A\cap N_p)$ for all $f\in K\setminus\{1\}$. The last equality is equivalent to $\x^{\v_f}\in n_f(A\cap N_p)$ and this is again equivalent to $\v_f\in \varphi^{-1}(n_f(A\cap N_p))$. Therefore $\mc{T}_p(A)=\mc{M}_p(A\cap N_p)\times \prod_{f\in K\setminus\{1\}}\varphi^{-1}(n_f(A\cap N_p))$ and this is an open subset of $Tr(h,\mb{Z}_p)\times M_{K\setminus\{1\}\times h}(\mb{Z}_p)$. Using Lemma \ref{medida} and the formulae for $\mu(\mc{M}_p(A\cap N_p))$ and $[N_p:A\cap N_p]$ listed above, one obtains the desired formula for $\mu(\mc{T}_p(A))$.
\end{proof}

Defining $\mc{T}_{K,p}^*=\bigcup_{A\in\mathscr{F}_{S_p,K}^*}\mc{T}_p(A)$ and arguing as in the proof of Proposition 2.6 of \cite{GSS}, one obtains
\begin{equation}\label{formula integral}
\zeta_{S_p,K}^*(s)=(1-p^{-1})^{-h}\int_{\mathcal{T}_{K,p}^*}\prod_{i=1}^h|t_{ii}|_p^{s-i-|K|+1}d\mu.
\end{equation}

  The next step is to describe  $\mc{T}_{K,p}^\s$ and $\mc{T}_{K,p}^\lhd$ as sets of matrices with entries satisfying cone conditions, that is, we want to find a finite set of polynomials $p_i^*,q_i^*$ in an appropriate number of variables with rational coefficients such that, for each prime $p$, $\mc{T}_{K,p}^*=\{(\t,\v)\in Tr(h,\mb{Z}_p)\times M_{K\setminus\{1\}\times h}(\mb{Z}_p):p_i^*(\t,\v)|q_i^*(\t,\v)\}$ up to a set of measure zero.
 One condition for a pair $(\t,\v)$ to be in $\mc{T}_{K,p}^*$ is that $\t$ must represent a good basis for some $B\in\mathscr{F}^*_{N_p}$. We will see that this condition can be described using cone conditions, and then we will see that the other conditions on $(\t,\v)$ to be in $\mc{T}_{K,p}^*$ are a finite number of conditions of the form $\x^{\h(\t,\v)}\in\overline{\langle \x^{\t_1},\ldots,\x^{\t_h}\rangle}\subseteq N_p$, where $\h=(h_1,\ldots,h_h)$ and each $h_i$ is a polynomial with rational coefficients not depending on $p$ (here the bar $\overline{\ \cdot\  }$ denotes topological closure in $N_p$). Therefore we have to translate this kind of conditions into cone conditions and this is what we will do first.

 At this point, we need to set up some further notations and conventions. All the polynomials will have coefficients in $\mb{Q}$. There will be polynomials in many different numbers of variables and instead of trying to define them precisely, we shall deduce their definitions from where they are being evaluated. For example, if $q$ is a polynomial which is evaluated on the entries of upper triangular matrices $\t\in Tr(h,\mb{Z}_p)$, then it means that $q$ is a polynomial in the variables $T_{ij}$ ($1\leq i\leq j\leq h$) with rational coefficients. We shall write simply $q(\T)\in \mb{Q}[\T]$ and its evaluation on the entries of $\t$ will be written $q(\t)$. Similarly, if a polynomial $q$ is evaluated on the entries of pair of matrices $(\t,\v)\in Tr(h,\mb{Z}_p)\times M_{K\setminus\{1\}}(\mb{Z}_p)$, then we shall write $q(\T,\V)\in\mb{Q}[\T,\V]$. As a last example, if $\t'$ is the matrix which is obtained from $\t$ by deleting the first row and the first column, then a polynomial which can be evaluated on the entries of $\t'$ will be written $p(\T')\in\mb{Q}[\T']$.

For a vector $\v=(v_1,\ldots,v_h)$, we shall write $\v^i=(0,\ldots,0,v_i,\ldots,v_h)$.
For an $h_1\times h_2$-matrix $\t$, the vectors $\t_1,\ldots,\t_{h_1}$ denote the row vectors of $\t$ and, for $j\leq\min \{h_1,h_2\}$, $\t^{(j)}$ denotes the matrix obtained from $\t$ by deleting its first $j-1$ rows and its  first $j-1$ columns. An $h$-tuple of polynomials $(p_1(\T),\ldots,p_h(\T))$ will be written as $\mathbf{p}(\T)$, and so the notation $\x^{\mathbf{p}(\t)}$  means just $x_1^{p_1(\t)}\ldots x_h^{p_h(\t)}$.

Finally, we recall the polynomials expressing multiplication and exponentiation in the $\tau$-group $N$ with respect to the given Mal'cev basis $(x_1,\ldots,x_h)$ (see \cite[Chapter 6]{H}). These are $h$-tuples of polynomials $f_1(\X,\Y),\ldots,f_h(\X,\Y)\in \mb{Q}[\X,\Y]$ and $g_1(\X,W),\ldots,g_h(\X,W)\in\mb{Q}[\X,W]$ such that $\mathbf{x}^{\mathbf{a}}\mathbf{x}^{\mathbf{b}}=\mathbf{x}^{\mathbf{f}(\mathbf{a},\mathbf{b})}$ and $(\mathbf{x}^{\mathbf{a}})^w=\mathbf{x}^{\mathbf{g}(\mathbf{a},w)}$ for all $\mathbf{a},\mathbf{b}\in\mb{Z}^h$ and $w\in\mb{Z}$. Using these polynomials we can also express the commutator operation by polynomials, that is, there exist polynomials $c_1(\X,\Y),\ldots,c_h(\X,\Y)\in\mb{Q}[\X,\Y]$ such that $[\x^{\a},\x^{\b}]=\x^{\mathbf{c}(\a,\b)}$.
Some practical facts we shall use about these polynomials are: $f_1(\X,\Y)=X_1+Y_1$, and more generally $f_i(\a^i,\b^i)=a_i+b_i$, $\forall i$. Similarly $g_i(\a^i,w)=a_iw$ and $c_k(\a^i,\b^j)=0$, $\forall k\leq\max\{i,j\}, \forall i,j$.

\emph{Algorithm:} We define recursively polynomials $p_i,q_i\in\mb{Q}[\T,\Z]$ for $i=1,\ldots,h$ such that, for each prime $p$, if $\t$ represents a good basis for some open subgroup of $N_p$, then the condition $\x^{\z}\in\overline{\langle\x^{\t_1},\ldots,\x^{\t_h}\rangle}$ is equivalent to $q_i(\t,\z)|p_i(\t,\z),\ i=1,\ldots,h$:

\ \noindent 1.\ Choose variables $T_{ij}$ for $1\leq i\leq j\leq h$, $Z_1,\dots,Z_h$ and $W_1,\ldots,W_h$, and set $\T_i=(0,\ldots,0,T_{ii},\ldots,T_{ih})$ and $\Z=(Z_1,\ldots,Z_h)$.

\ \noindent 2.\ Define recursively $h$-tuples of polynomials $\k_1,\ldots\k_h$ by
\begin{itemize}
\item $\k_1=\Z$,
\item $\k_i=\f(\mathbf{g}(\mathbf{g}^{i}(\T_{i-1},W_{i-1}),-1),\k_{i-1}^{i})$ for $1<i\leq h$.
\end{itemize}
Observe that the entries of $\k_i$ are polynomials in the variables $T_{rs}$ ($r<i,\ r\leq s$), $Z_1,\ldots,Z_h, W_1,\ldots,W_{i-1}$. For simplicity we shall write $\k_i=\k_i(\T_1,\ldots,\T_{i-1},\Z,W_1,\ldots,W_{i-1})$.

\ \noindent 3.\ Define rational functions $v_1(\T,\Z),\ldots,v_h(\T,\Z)$ recursively by
\begin{itemize}
\item $v_1(\T,\Z)=k_{11}(\Z)/T_{11}$
\item $v_i(\T,\Z)=k_{ii}(\T_1,\ldots,\T_{i-1},\Z,v_1(\T,\Z),\ldots,v_{i-1}(\T,\Z))/T_{ii}$ for $1<i\leq h$.
\end{itemize}
Write $v_i(\T,\Z)=p_i(\T,\Z)/q_i(\T,\Z)$ with $p_i(\T,\Z),q_i(\T,\Z)\in\mb{Q}[\T,\Z]$.
Note that $ q_i(\T,\Z)$ can be chosen to be a monomial in $T_{11},\ldots,T_{ii}$.
\begin{proposition}\label{Algoritmo}
   Suppose that $\t\in Tr(h,\mb{Z}_p)$  represents a good basis for an open subgroup of $N_p$ and let $\z=(z_1,\ldots,z_h)\in\mb{Z}_p^h$.
   Then $x_1^{z_1}\ldots x_h^{z_h}\in\overline{\langle \x^{\t_1},\ldots,\x^{\t_h}\rangle}$ if and only if $q_i(\t,\z)|p_i(\t,\z)$ for $i=1,\ldots,h$.
\end{proposition}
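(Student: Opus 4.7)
The plan is to prove the equivalence by induction on $i$, using the geometric interpretation that the tuple $\k_i$ records the Mal'cev coordinates of the ``residual''
$$y_i := (\x^{\t_{i-1}})^{-w_{i-1}}\cdots(\x^{\t_1})^{-w_1}\x^{\z},$$
where $w_j := v_j(\t,\z)\in\mb{Z}_p$ become well-defined once $q_j(\t,\z)\mid p_j(\t,\z)$ for all $j<i$; moreover $y_i$ should lie in $\overline{\langle x_i,\ldots,x_h\rangle}$, matching the shape $\k_i=\k_i^i$ built into the recursion. The base case $i=1$ is immediate, since $\k_1=\z$ is the coordinate vector of $y_1=\x^{\z}$.

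For the inductive step I would exploit the upper-triangular shape of $\t$. Setting $\u:=\mathbf{g}(\t_{i-1},w_{i-1})$, the element $(\x^{\t_{i-1}})^{w_{i-1}}=\x^{\u}$ has $u_1=\cdots=u_{i-2}=0$ and $u_{i-1}=t_{i-1,i-1}\,w_{i-1}$, from $g_{i-1}(\t_{i-1},w)=t_{i-1,i-1}w$ and the vanishing at lower positions. By induction $y_{i-1}$ has coordinates $\k_{i-1}$ with the same zero pattern and with $(i-1)$-st entry $(k_{i-1})_{i-1}$, and the relation $w_{i-1}=(k_{i-1})_{i-1}/t_{i-1,i-1}$ forces $u_{i-1}=(k_{i-1})_{i-1}$. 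Expanding each group element in Mal'cev normal form, $(\x^{\t_{i-1}})^{w_{i-1}}=x_{i-1}^{u_{i-1}}\cdot\x^{\u^i}$ and $\x^{\k_{i-1}}=x_{i-1}^{(k_{i-1})_{i-1}}\cdot\x^{\k_{i-1}^i}$, so the central $x_{i-1}$-factors cancel and one obtains
$$y_i=(\x^{\u^i})^{-1}\x^{\k_{i-1}^i}=\x^{\mathbf{f}(\mathbf{g}(\mathbf{g}^i(\t_{i-1},w_{i-1}),-1),\,\k_{i-1}^i)}=\x^{\k_i},$$
which also confirms $y_i\in\overline{\langle x_i,\ldots,x_h\rangle}$.

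The divisibility criterion then falls out. The element $\x^{\z}$ lies in $B=\overline{\langle\x^{\t_1},\ldots,\x^{\t_h}\rangle}$ iff there exist $\l_1,\ldots,\l_h\in\mb{Z}_p$ with $\x^{\z}=(\x^{\t_1})^{\l_1}\cdots(\x^{\t_h})^{\l_h}$. Reading off the $i$-th Mal'cev coordinate after peeling off the first $i-1$ factors, only $(\x^{\t_i})^{\l_i}$ survives (the later $(\x^{\t_j})^{\l_j}$ with $j>i$ vanish in position $i$ because $\t_j=\t_j^j$ and $c_k(\a^i,\a^j)=0$ for $k\leq\max\{i,j\}$), yielding the equation $\l_i t_{ii}=(k_i)_i$. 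Thus $\l_i\in\mb{Z}_p$ is solvable iff $t_{ii}\mid (k_i)_i$, in which case $\l_i=w_i=v_i(\t,\z)$, and after clearing denominators according to $v_i=k_{ii}/T_{ii}$ this divisibility is precisely $q_i(\t,\z)\mid p_i(\t,\z)$.

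The main obstacle is the cancellation identity identifying the coordinates of $y_i$ with $\k_i$: one has to track carefully where the factor $x_{i-1}^{(k_{i-1})_{i-1}}$ cancels and why truncating to the ``tail'' $\x^{\u^i}$ (via $\mathbf{g}^i$) is legitimate, relying on the upper-triangular vanishing together with the identities $g_i(\a^i,w)=a_i\cdot w$ and $f_i(\a^i,\b^i)=a_i+b_i$. Once this is in hand the remainder is routine bookkeeping; the passage from $\mb{Z}$ to $\mb{Z}_p$ is harmless because $\mathbf{f}$ and $\mathbf{g}$ are polynomial and hence continuous.
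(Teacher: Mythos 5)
Your proof is correct and follows essentially the same strategy as the paper: induct on $i$, peel off $(\x^{\t_1})^{w_1},\ldots,(\x^{\t_{i-1}})^{w_{i-1}}$ one at a time, and show that the residual has Mal'cev coordinates $\k_i$ (so that the $i$-th divisibility condition $t_{ii}\mid (k_i)_i$, i.e.\ $q_i\mid p_i$, is exactly what determines $w_i\in\mb{Z}_p$). The paper phrases the induction directly as an ``iff'' statement on membership of $\x^{\k_i}$ in $\overline{\langle\x^{\t_i},\ldots,\x^{\t_h}\rangle}$, while you introduce the residual $y_i$ explicitly and prove $y_i=\x^{\k_i}$ via the cancellation of the $x_{i-1}$-factor and the identity $u_{i-1}=(k_{i-1})_{i-1}$; this is the same computation presented in a slightly different bookkeeping style.
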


\begin{proof}
Let $B_\t=\overline{\langle\x^{\t_1},\ldots,\x^{\t_h}\rangle}$. Since $\t$ represents a good basis for $B_\t$, every element of $B_\t$ can be written uniquely in the form $(\x^{\t_1})^{a_1}\ldots(\x^{\t_h})^{a_h}$ for some $a_i\in\mb{Z}_p$. The element $(\x^{\t_i})^{a_i}\ldots (\x^{\t_h})^{a_h}$, written in the Mal'cev basis $(x_1,\ldots,x_h)$, has the form $x_i^{t_{ii}a_i}x_{i+1}^{b_{i+1}}\ldots x_h^{b_h}$ for some $b_{i+1},\ldots,b_h\in\mb{Z}_p$. We shall make use of these facts without mention.

Let $w_i=v_i(\t,\z)$. We have that $\x^{\z}\in B_\t$ if and only if  $\x^{\z}=(\x^{\t_1})^{a_1}\ldots (\x^{\t_h})^{a_h}$ for some $a_1,\ldots,a_h\in\mb{Z}_p$. Since $(\x^{\t_1})^{a_1}\ldots (\x^{\t_h})^{a_h}$ has the form $x_{1}^{t_{11}a_1}x_2^{b_2}\ldots x_h^{b_h}$, the condition $\x^\z\in B_\t$ implies $t_{11}|z_1$, or equivalently $w_1\in\mb{Z}_p$, and therefore the element $(\x^{\t_1})^{z_1/t_{11}}=\x^{\mathbf{g}(\t_1,w_1)}$ must lie in $B_\t$. Then $\x^{\z}\in B_\t$ if and only if $w_1\in\mb{Z}_p$ and $\x^{\z}=\x^{\mathbf{g}(\t_1,w_1)}a$ for some $a\in B_\t$. Since $g_1(\t_1,w_1)=t_{11}w_1=z_1$, the last condition is equivalent to $w_1\in\mb{Z}_p$ and $\x^{\z^2}=\x^{\mathbf{g}^2(\t_1,w_1)}a$, for some $a\in\overline{\langle \x^{\t_2},\ldots,\x^{\t_h}\rangle}$.
Operating with $\f$ and $\mathbf{g}$ we get
     \[(\x^{\mathbf{g}^2(\t_1,w_1)})^{-1}\x^{\z^2}=\x^{\f(\mathbf{g}(\mathbf{g}^{2}(\t_1,w_1),-1),\z^{2})}=\x^{\k_2(\t_1,\z,w_1)}.\]
Thus $\x^{\z}\in B_\t$ if and only if $w_1\in\mb{Z}_p$ and $\x^{\k_2(\t_1,\z,w_1)}\in\overline{\langle\x^{\t_2},\ldots,\x^{\t_h}\rangle}$.

   Let $i>1$ and assume that $\x^{\z}\in B_\t$ if and only if $w_1,\ldots,w_{i-1}\in\mb{Z}_p$ and $\x^{\k_i(\t_1,\ldots,\t_{i-1},\z,w_1,\ldots,w_{i-1})}\in\overline{\langle \x^{\t_i},\ldots,\x^{\t_h}\rangle}$.
   Working as in the last paragraph and assuming that $w_1,\ldots,w_{i-1}\in\mb{Z}_p$, we see that the last condition  is equivalent to $t_{ii}|k_{ii}(\t_1,\ldots,\t_{i-1},\z,w_1,\ldots,w_{i-1})$ (or equivalently $w_i\in\mb{Z}_p$) and $$\x^{\k_i(\t_1,\ldots,\t_{i-1},\z,w_1,\ldots,w_{i-1})}=\x^{\mathbf{g}(\t_{i},w_i)}a\ \ \mbox{for some }\ a\in\overline{\langle \x^{\t_i},\ldots,\x^{\t_h}\rangle}.$$ 
   Again this is equivalent to $w_i\in\mb{Z}_p$ and $$\x^{\k_i^{i+1}(\t_1,\ldots,\t_{i-1},\z,w_1,\ldots,w_{i-1})}=\x^{\mathbf{g}^{i+1}(\t_i,w_i)}a\ \ \mbox{ for some }\ a\in \overline{\langle \x^{\t_{i+1}},\ldots,\x^{\t_h}\rangle}.$$
 If we write $(\x^{\mathbf{g}^{i+1}(\t_i,w_i)})^{-1}\x^{\k_i^{i+1}(\t_1,\ldots,\t_{i-1},\z,w_1,\ldots,w_i)}$ as $\x^\a$ then clearly
  $$\a=\f(\mathbf{g}(\mathbf{g}^{i+1}(\t_i,w_i),-1),\k_i^{i+1}(\t_1,\ldots,\t_{i-1},\z,w_1,\ldots,w_i)),$$ which is $\k_{i+1}(\t_1,\ldots,\t_i,\z,w_1,\ldots,w_i)$. It follows that $\x^\z\in B_\t$ if and only if $w_1,\ldots,w_i\in\mb{Z}_p$ and $\x^{\k_{i+1}(\t_1,\ldots,\t_i,\z,w_1,\ldots,w_i)}\in\overline{\langle \x^{\t_{i+1}},\ldots,\x^{\t_h}\rangle}$. By induction we conclude that $\x^\z\in B_\t$ if and only if $w_1,\ldots,w_h\in\mb{Z}_p$, and since $w_i=p_i(\t,\z)/q_i(\t,\z)$, we have that $\x^\z\in B_\t$ if and only if $q_i(\t,\z)|p_i(\t,\z)$ for $i=1,\ldots,h$.
\end{proof}

\begin{corollary}\label{Algoritmo, corollary}
Let $N$ be a $\tau$-group with Mal'cev basis $(x_1,\ldots,x_h)$, and for each prime $p$ let $\mc{M}_p^{\s}=\cup_{B\in\mathscr{F}_{N_p}^\s}\mc{M}_p(B)$ (calculated with respect to this basis). Then there exist a finite set $I$ and polynomials $r_i(\T),s_i(\T)\in\mb{Q}[\T]$ ($i\in I$) such that, for each prime $p$, $\mc{M}_p^\s=\{\t\in Tr(h,\mb{Z}_p): t_{11}\ldots t_{hh}\neq 0,\ s_i(\t)|r_i(\t),\forall i\in I\}$.
\end{corollary}
\begin{proof} We proceed by induction on $h$. The case  $h=1$ being trivial we suppose that $h>1$. Recall the definitions of $N^{(j)}$ and $N_p^{(j)}$  ($j=1,\ldots,h$) at the beginning of this subsection, and observe that if
 $\t$ represents a good basis for $N^{(j)}_p$, then $\t^{(j')}$ represents a good basis for $N^{(j+j'-1)}_p$.
Applying the algorithm to $N^{(j)}$ with Mal'cev basis $(x_j,\ldots,x_h)$, we obtain polynomials $p_i^{j}(\T^{(j)},\Z^j), q_i^{j}(\T^{(j)},\Z^j)\in\mb{Q}[\T^{(j)},\Z^j]$, $i=j,\ldots,h$.

The inductive hypothesis, applied to $N^{(2)}$ with Mal'cev basis $(x_2,\ldots,x_h)$, implies that there exist a finite set $I_1$ and polynomials $s_i(\T^{(2)}),t_i(\T^{(2)})$ ($i\in I_1$) such that $\t'\in Tr(h-1,\mb{Z}_p)$ represents a good basis for some open subgroup of $(N^{(2)})_p$ if and only if $t'_{22}\ldots t'_{hh}\neq 0$ and $s_i(\t')|r_i(\t')$ for all $i\in I_1$. It is proved in \cite[Lemma 2.10]{L} that
a matrix $\t\in Tr(h,\mb{Z}_p)$ represents a good basis for some open subgroup of $N_p$ if and only if
\begin{enumerate}
  \item $t_{11}\ldots t_{hh}\neq 0$ and
  \item $\x^{\mathbf{c}(\t_i,\t_j)}\in \overline{\langle\x^{\t_{j+1}},\ldots,\x^{\t_h}\rangle}$ for all $1\leq i<j\leq h$;
\end{enumerate}
this is equivalent to
\begin{enumerate}
  \item $t_{11}\ldots t_{hh}\neq 0$,
  \item $\t^{(2)}$ represents a good basis for $N^{(2)}_p$, and
  \item $\x^{\mathbf{c}(\t_1,\t_j)}\in\overline{\langle\x^{\t_{j+1}},\ldots,\x^{\t_{h}}\rangle}$ for $j=2,\ldots,h$;
\end{enumerate}
and this  is again equivalent to
\begin{enumerate}
  \item $t_{11}\ldots t_{hh}\neq 0$,
  \item $s_i(\t)|r_i(\t), i\in I_1$, and
  \item $q_i^j(\t^{(j+1)},\mathbf{c}(\t_1,\t_j))|p_i^j(\t^{(j+1)},\mathbf{c}(\t_1,\t_j)), i=j,\ldots,h.$, $j=2,\ldots,h$.
\end{enumerate}
In the last equivalence we are using Proposition \ref{Algoritmo} in combination with the inductive hypothesis.
This completes the proof.
\end{proof}

\begin{corollary}\label{Algoritmo,corollary 2}
The subgroup zeta function and the normal zeta function of a $\tau$-group are Euler products of  cone integrals over $\mb{Q}$.
\end{corollary}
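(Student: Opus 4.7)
The plan is to specialize the analysis leading to Corollary \ref{Algoritmo, corollary} to the case of a $\tau$-group $N$, viewed as the trivial virtually $\tau$-group $S:1\to N\to N\to 1\to 1$. Then $F=\{1\}$, the only $K\in\mathscr{F}_F^*$ is $K=\{1\}$, so $|K|-1=0$, and $K\setminus\{1\}=\emptyset$, which means the $\v$-variables disappear and $\mc{T}_{K,p}^*=\mc{M}_p^*$. Formula (\ref{formula integral}) therefore collapses to
\begin{align*}
\zeta_{N_p}^*(s) \;=\; (1-p^{-1})^{-h}\int_{\mc{M}_p^*}\prod_{i=1}^h|t_{ii}|_p^{s-i}\,d\mu,
\end{align*}
while Proposition \ref{existence of Euler product}, applied with trivial $F$, gives $\zeta_N^*(s)=\prod_p\zeta_{N_p}^*(s)$.

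The next step is to describe $\mc{M}_p^*$ by cone conditions for both values of $*$. For $*=\leq$ this is exactly the content of Corollary \ref{Algoritmo, corollary}. For $*=\lhd$ I would extend that corollary by adjoining the normality requirements: the open subgroup $B_\t=\overline{\langle\x^{\t_1},\ldots,\x^{\t_h}\rangle}$ is normal in $N_p$ if and only if $[x_i,\x^{\t_j}]\in B_\t$ for every $1\leq i,j\leq h$. Writing each such commutator as $\x^{\mathbf{c}(\mathbf{e}_i,\t_j)}$, where $\mathbf{c}$ is the commutator polynomial tuple of the chosen Mal'cev basis and $\mathbf{e}_i$ denotes the $i$-th standard basis vector, Proposition \ref{Algoritmo} converts each of these conditions into finitely many cone conditions on the entries of $\t$. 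Adjoining them to the conditions from Corollary \ref{Algoritmo, corollary} yields the desired cone-type description of $\mc{M}_p^\lhd$.

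With $\mc{M}_p^*$ so described, I would assemble the cone integral data $\mc{D}^*=(f_0,g_0;f_1,g_1,\ldots,f_l,g_l)$ by taking $f_0(\T)=T_{11}T_{22}\cdots T_{hh}$ and $g_0(\T)=T_{11}^{h-1}T_{22}^{h-2}\cdots T_{h-1,h-1}$, with the remaining pairs $(f_i,g_i)$ as in the previous paragraph. Since $|f_0|_p^{s-h}|g_0|_p=\prod_{i=1}^h|t_{ii}|_p^{s-i}$, this yields $Z_{\mc{D}^*}(s-h,p)=(1-p^{-1})^h\zeta_{N_p}^*(s)$. The constant term $a_{p,0}(\mc{D}^*)$ is the measure of $\{\t\in\mc{M}_p^*:v_p(t_{ii})=0\;\forall i\}$ weighted by $|g_0|_p=1$; but all diagonal entries being units forces $[N_p:B_\t]=1$, so $B_\t=N_p$, which automatically satisfies every cone and normality condition, and this locus reduces to $\{\t\in Tr(h,\mb{Z}_p):t_{ii}\in\mb{Z}_p^\times\;\forall i\}$, of measure $(1-p^{-1})^h$. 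Therefore $a_{p,0}^{-1}(\mc{D}^*)Z_{\mc{D}^*}(s-h,p)=\zeta_{N_p}^*(s)$ for every prime $p$, and taking the product over $p$ gives $\zeta_N^*(s)=Z_{\mc{D}^*}(s-h)$ as the desired Euler product of cone integrals over $\mb{Q}$.

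The only genuinely new content is the extension of Corollary \ref{Algoritmo, corollary} to the normal case, which I expect to be a routine application of Proposition \ref{Algoritmo} to the finitely many commutator conditions $\x^{\mathbf{c}(\mathbf{e}_i,\t_j)}\in B_\t$; everything else is assembly of previously established formulas and bookkeeping of the normalization constant $(1-p^{-1})^h$. No essentially new ideas are required beyond what has already been developed in the preceding subsection.
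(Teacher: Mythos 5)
Your proposal is correct and follows essentially the same route as the paper: describe $\mc{M}_p^\s$ by cone conditions via Corollary \ref{Algoritmo, corollary}, adjoin the commutator conditions $\x^{\mathbf{c}(\mathbf{e}_i,\t_j)}\in B_\t$ (the paper cites \cite[Lemma 2.4]{GSS} for this normality criterion) and convert them with Proposition \ref{Algoritmo}, then plug into the $p$-adic integral formula for $\zeta_{N_p}^*(s)$. The only differences are cosmetic: you obtain the integral and Euler product by specializing the paper's own machinery to $F=\{1\}$ rather than quoting \cite{GSS} directly, and you spell out the choice of $f_0,g_0$ and the normalization $a_{p,0}=(1-p^{-1})^h$, details the paper leaves implicit.
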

\begin{proof}
We fix a $\tau$-group $N$ with a Mal'cev basis as before. It follows from \cite[Proposition 2.6]{GSS} that
\[\zeta_N^*(s)=\prod_p \left((1-p^{-1})^{-h}\int_{\mc{M}_p^*}\prod_{i=1}^h|t_{ii}|_p^{s-i}d\mu\right),\]
where $\mc{M}_p^*$ is the set of all those $\t\in Tr(h,\mb{Z}_p)$ representing a good basis for some $B\in\mathscr{F}_{N_p}^*$.
 By Corollary \ref{Algoritmo, corollary}, there exist a finite set $I$ and polynomials $r_i(\T),s_i(\T)\in\mb{Q}[\T]$ ($i\in I$) such that, for each prime $p$, $\mc{M}_p^\s=\{\t\in Tr(h,\mb{Z}_p): t_{11}\ldots t_{hh}\neq 0, s_i(\t)|r_i(\t) \forall i\in I\}$.  Since the set of matrices $\t\in Tr(h,\mb{Z}_p)$ with $t_{11}\ldots t_{hh}=0$ has Haar measure zero,  we can eliminate the condition $t_{11}\ldots t_{hh}\neq 0$ from the set of integration. This proves the corollary in the case $*=\leq$. In the case $*=\lhd$, it follows from  \cite[Lemma 2.4]{GSS} that $$\mc{M}_p^\lhd=\{\t\in\mc{M}_p^\s: \x^{\mathbf{c}(\mathbf{e}_i,\t_j)}\in\overline{\langle\x^{\t_{1}},\ldots\x^{\t_h}\rangle}, \forall i,j\}$$ (here $\mathbf{e}_1,\ldots,\mathbf{e}_h$ are the unit basis vectors of $\mb{Z}^h$). Using the polynomials $p_i(\T,\Z)$ and $q_i(\T,\Z)$ from the algorithm, Proposition \ref{Algoritmo} and the last description of $\mc{M}_p^\s$, we obtain
\begin{align*} \mc{M}_p^\lhd=&\{\t\in Tr(h,\mb{Z}_p): t_{11}\ldots t_{hh}\neq 0,\ s_i(\t)|r_i(\t)\ \forall i\in I,\\
 & q_i(\t,\mathbf{c}(\mathbf{e}_i,\t_j))|p_i(\t,\mathbf{c}(\mathbf{e}_i,\t_j))\ \forall i,j=1,\ldots,h\}.
\end{align*}
 Again, we can eliminate the condition $t_{11}\ldots t_{hh}\neq 0$ from the set of integration. This completes the proof.
  \end{proof}

We come back to our original situation. We have fixed a virtually $\tau$-group $S:\SES$ and a Mal'cev basis $(x_1,\ldots,x_h)$ for $N$. We have chosen elements $g_f\in G$ for $f\in F$ such that $\pi(g_f)=f$, $g_1=1$ and $g_{f^{-1}}=g_f^{-1}$. Let $K\in\mathscr{F}_{F}^*$. For each prime $p$, we defined $S_p$ and we obtained an expression for $\zeta_{S_p,K}(s)$ as a $p$-adic integral over an open subset $\mc{T}_{K,p}^*\subseteq Tr(h,\mb{Z}_p)\times M_{f\in K\setminus\{1\}\times h}(\mb{Z}_p)$.
\begin{proposition}\label{main theorem, application}
  Given $*\in\{\leq,\lhd\}$, there exist a finite set $S$ and $h$-tuples of polynomials $\z_s^*(\T,\V)=(z_{s1}^*(\T,\V),\ldots,z_{sh}^*(\T,\V))$ ($s\in S$) with $z_{si}^*(\T,\V)\in\mb{Q}[\T,\V]$, such that for each prime $p$
\begin{align*}\mc{T}_{K,p}^*=\{(\t,\v)\in \mbox{Tr}(h,\mb{Z}_p)\times M_{K\setminus\{1\}\times h}(\mb{Z}_p):&\ \t\in\mc{M}_p^*,\ \mbox{and}\  \forall\ s\in S \\ &\x^{\mathbf{z}_s^*(\t,\v)}\in\overline{\langle \x^{\t_1},\ldots,\x^{\t_h}\rangle}\}.\end{align*}
\end{proposition}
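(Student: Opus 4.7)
Fix $K\in\mathscr{F}_F^*$ and consider a pair $(\t,\v)\in \opn{Tr}(h,\mb{Z}_p)\times M_{K\setminus\{1\}\times h}(\mb{Z}_p)$ with $\t\in\mc{M}_p$; then $B_\t := \overline{\langle \x^{\t_1},\ldots,\x^{\t_h}\rangle}$ is an open subgroup of $N_p$. Setting
\[ A(\t,\v) := B_\t \cup \bigcup_{f\in K\setminus\{1\}} g_f\x^{\v_f}B_\t, \]
one has $\pi_p(A(\t,\v))=K$ and $A(\t,\v)\cap N_p = B_\t$ by construction, so $(\t,\v)\in \mc{T}_{K,p}^*$ is equivalent to $A(\t,\v)$ being a subgroup of $G_p$ when $*=\leq$, and additionally a normal subgroup when $*=\lhd$. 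My plan is to translate each of these group-theoretic requirements into finitely many cone-type conditions of the form $\x^{\z(\t,\v)}\in B_\t$, with $\z$ an $h$-tuple of polynomials in $\mb{Q}[\T,\V]$ independent of $p$.

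The essential input is a polynomial description of all relevant operations on $N_p$. Beyond the Mal'cev polynomials $\mathbf{f},\mathbf{g},\mathbf{c}$ of $N$, the conjugation automorphism $n\mapsto g_f n g_f^{-1}$ of $N$ extends uniquely to $N_p$; writing $g_f x_i g_f^{-1} = \x^{\b_i^f}$ with $\b_i^f\in\mb{Z}^h$ and composing with $\mathbf{f},\mathbf{g}$, one obtains an $h$-tuple $\boldsymbol{\sigma}_f(\X)\in\mb{Q}[\X]^h$ satisfying $g_f\x^{\a}g_f^{-1} = \x^{\boldsymbol{\sigma}_f(\a)}$ for every $\a\in\mb{Z}_p^h$. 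Similarly, the cocycle is encoded by constants $\alpha(f,f')\in\mb{Z}^h$ defined by $g_f g_{f'} = \x^{\alpha(f,f')}g_{ff'}$.

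For $*=\leq$, the requirement that $A(\t,\v)$ be a subgroup reduces to (a) each coset representative $g_f\x^{\v_f}$ normalizes $B_\t$ — which, by continuity, is checked only on the generators $\x^{\t_j}$ of $B_\t$, giving conditions $(g_f\x^{\v_f})\x^{\t_j}(g_f\x^{\v_f})^{-1}\in B_\t$ — and (b) the products of coset representatives respect the group law on $K$: for each pair $(f,f')\in K\times K$, a direct computation using the formulas above gives
\[ g_f\x^{\v_f}\cdot g_{f'}\x^{\v_{f'}} = g_{ff'}\x^{\mathbf{H}_{f,f'}(\t,\v)}, \]
where $\mathbf{H}_{f,f'}$ is an explicit $h$-tuple in $\mb{Q}[\T,\V]$ built from $\mathbf{f},\mathbf{g},\boldsymbol{\sigma}_{ff'}$ and $\alpha(f,f')$; requiring this element to lie in $g_{ff'}\x^{\v_{ff'}}B_\t$ (with the convention $\v_1:=\mathbf{0}$ when $ff'=1$) is equivalent to $\x^{\z_{f,f'}(\t,\v)}\in B_\t$ for $\z_{f,f'}(\T,\V):=\mathbf{f}(\mathbf{H}_{f,f'}(\T,\V),\mathbf{g}(\V_{ff'},-1))$. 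Condition (a) admits an analogous rewriting.

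For $*=\lhd$, I additionally impose stability of $A(\t,\v)$ under conjugation by $G_p$. Since $K\lhd F$, the index comparison $[\pi_p^{-1}(K):gA(\t,\v)g^{-1}]=[\pi_p^{-1}(K):A(\t,\v)]$ upgrades $\subseteq$ to $=$, and the set $\{g\in G_p : gA(\t,\v)g^{-1}\subseteq A(\t,\v)\}$ is then a closed subgroup of $G_p$; hence it suffices to check conjugation by each element of the topological generating set $\{x_1,\ldots,x_h\}\cup\{g_f:f\in F\}$ applied to the topological generators $\{\x^{\t_j}\}_j$ and $\{g_f\x^{\v_f}\}_{f\in K\setminus\{1\}}$ of $A(\t,\v)$. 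Each such elementary conjugation (e.g.\ $x_i\cdot g_f\x^{\v_f}\cdot x_i^{-1}$) is again a polynomial expression in $(\t,\v)$ and yields a condition $\x^{\z(\t,\v)}\in B_\t$. Assembling all polynomial $h$-tuples $\z$ produced in the two cases into a single finite set $S$ (depending on $K$ and $*$ but not on $p$) will then give the stated description. I expect the main obstacle to be the careful bookkeeping: systematically enumerating the finite list of cases, justifying the topological-generation reduction for normality, and verifying that in each case the element to be checked lies in the prescribed coset, so that the condition reduces cleanly to membership in $B_\t$.
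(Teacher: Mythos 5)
Your proposal is correct and follows essentially the same route as the paper: characterize membership of $(\t,\v)$ in $\mc{T}_{K,p}^*$ by finitely many conditions obtained from checking the subgroup (and, for $*=\lhd$, normality) requirements on the generators $\x^{\t_j}$ and the coset representatives $g_f\x^{\v_f}$ under conjugation/multiplication by the generators $x_1,\ldots,x_h$ and $g_f$, $f\in F$, and then encode each such condition as $\x^{\z(\t,\v)}\in\overline{\langle\x^{\t_1},\ldots,\x^{\t_h}\rangle}$ via the Mal'cev polynomials together with the constants recording the $F$-action and the cocycle (the paper's $\mathbf{l}_{if}$, $\mathbf{n}_{ff'}$, $\mathbf{p}_f$). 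Only a cosmetic slip: your formula $\f(\mathbf{H}_{f,f'}(\T,\V),\mathbf{g}(\V_{ff'},-1))$ encodes membership in the right coset $B_\t\x^{\v_{ff'}}$ rather than the intended left coset $\x^{\v_{ff'}}B_\t$; the arguments of $\f$ should be swapped, which does not affect the argument.
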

\begin{proof}
     By definition, $(\t,\v)\in\mathcal{T}_{K,p}^\s$ if and only if $\t$ represents a good basis for an open subgroup $B_{\t}$ of $N_p$ and $\ds A_{(\t,\v)}:=B_{\t}\cup\bigcup_{f\in K\setminus\{1\}} g_f\x^{\v_f} B_{\t}$ lies in $\mathscr{F}_{S_p,K}^\s$. Then we can assume that $\t$  represents a good basis for an open subgroup $B_\t$.
 We claim that $A_{(\t,\v)}$ is a subgroup of $G_p$ if and only if the following hold.
\begin{enumerate}
\item{$(g_f\x^{\v_f})^{-1}\x^{\t_i}g_f\x^{\v_f}\in B_{\t}$ for $i=1,\ldots,h$ and $f\in K\setminus\{1\}$;}
\item{for $f,f'\in K\setminus\{1\}$ with $ff'\neq 1$, $(g_{ff'}\x^{\v_{ff'}})^{-1}g_f\x^{\v_f}g_{f'}\x^{\v_{f'}}\in B_{\t}$;}
\item{$g_f\x^{\v_f}g_{f^{-1}}\x^{\v_{f^{-1}}}\in B_{\t}$ for $f\in K\setminus\{1\}$.}
\end{enumerate}
All these conditions are necessary because if $A_{(\t,\v)}$ is a subgroup of $G_p$ then $B_\t=A_{(\t,\v)}\cap N_p$ is a normal subgroup of $A_{(\t,\v)}$. Thus (1) is consequence of this; (2) and (3) reflect the fact that $A_{(\t,\v)}/B_{\t}$ is a group. Conversely, (1) implies that $A_{(\t,\v)}$ is contained in the normalizer of $B_\t$; (2) and (3) imply that $ A_{(\t,\v)}/B_\t$ is a group and therefore $A_{(\t,\v)}$ is a subgroup of $G_p$. If this is the case, then $A_{(\t,\v)}\in\mathscr{F}_{S_p,K}^\s$.

If $A_{(\t,\v)}$ is a subgroup of $G_p$, we claim that it is a normal subgroup in $G_p$ if and only if the following holds
\begin{enumerate}
\setcounter{enumi}{3}
\item $\x^{\mathbf{c}(\mathbf{e}_i,\t_j)}\in B_\t$, for $i,j=1,\ldots,h$, where $\mathbf{e}_1,\ldots,\mathbf{e}_h$ are the canonical vectors of $\mb{Z}^h$;
\item {$g_f^{-1}\x^{\t_i}g_f\in B_{\t}$ for $i=1,\ldots,h$ and $f\in F\setminus\{1\}$;}
\item {$(\x^{\v_f})^{-1} g_f^{-1}x_ig_f\x^{\v_f}x_i^{-1}\in B_\t$, for $i=1,\ldots,h$ and $f\in K\setminus\{1\}$;}
\item {$(\x^{\v_{ff'f^{-1}}})^{-1}g_{ff'f^{-1}}^{-1}(g_fg_{f'}g_f^{-1})(g_f\x^{\v_{f'}}g_f^{-1})\in B_\t,\ f\in F\setminus\{1\},f'\in K\setminus\{1\}$.}
\end{enumerate}
If $A_{(\t,\v)}$ is normal in $G_p$, then $B_\t=N_p\cap A_{(\t,\v)}$ is also normal in $G_p$. This implies (5) and with  \cite[Lemma 2.4]{GSS} we also obtain (4). Again, normality of $B_\t$ implies that
 \begin{align}\label{conjugation by x_i}
 x_i A_{(\t,\v)}x_i^{-1}=B_\t\cup\bigcup_{f\in K\setminus\{1\}} x_ig_f\x^{\v_f}x_i^{-1}B_\t
 \end{align}
 for all $i=1,\ldots,h$; and
 \begin{align}\label{conjugation by g_f}
 g_fA_{(\t,\v)}g_f^{-1}=B_\t\cup \bigcup_{f'\in K\setminus\{1\}}g_fg_{f'}\x^{\v_{f'}}g_f^{-1} B_\t
 \end{align}
 for all $f\in F\setminus\{1\}$.
 Doing some computations one obtains
\begin{align}\label{conjugation by x_i: formulas}
x_ig_f\x^{\v_f}x_i^{-1}=g_f\x^{\v_f}((\x^{\v_f})^{-1} g_f^{-1}x_ig_f\x^{\v_f}x_i^{-1})
\end{align}
and
 \begin{align}\label{conjugation by g_f: formulas}
g_fg_{f'}\x^{\v_{f'}}g_f^{-1}=g_{ff'f^{-1}}\x^{\v_{ff'f^{-1}}}((\x^{\v_{ff'f^{-1}}})^{-1}g_{ff'f^{-1}}^{-1}(g_fg_{f'}g_f^{-1})(g_f\x^{\v_{f'}}g_f^{-1})). \end{align}
Therefore, if $A_{(\t,\v)}$ is normal then (\ref{conjugation by x_i}) and (\ref{conjugation by x_i: formulas}) implies (6) and it also implies that $K=\pi_p(A_{(\t,\v)})$ is normal. Thus $\pi_p^{-1}(K)/N$ is normal and therefore $\{g_{ff'f^{-1}}\x^{\v_{ff'f^{-1}}}N\}_{f'\in K}=\{g_{f'}\x^{\v_{f'}}N\}_{f'\in K}$ for each $f\in F$. This, in combination with (\ref{conjugation by g_f}) and (\ref{conjugation by g_f: formulas}), implies (7). Conversely, (4) and (5) imply that $B_\t$ is normal in $G_p$ and then (\ref{conjugation by x_i}), (\ref{conjugation by x_i: formulas}), (\ref{conjugation by g_f: formulas}), (6) and (7) imply that $A_{(\t,\v)}$ is normal in $G_p$.

Now we shall put each one of the conditions (1)-(7) in the form $\x^{\mathbf{z}(\t,\v)}\in B_\t$ for some polynomial $\z(\T,\V)\in\mb{Q}[\T,\V]$.
 For $i=1,\ldots, h$, $f,f'\in F$, we put $g_f^{-1}x_ig_f=\x^{\mathbf{l}_{if}}$ and $g_fg_{f'}=g_{ff'}\x^{\mathbf{n}_{ff'}}$. For any $\u\in\mb{Z}^h$, we have  $g_f^{-1}\x^{\u}g_f=(\x^{\mathbf{l}_{1f}})^{u_1}\ldots (\x^{\mathbf{l}_{hf}})^{u_h}=\x^{\mathbf{g}(\mathbf{l}_{1f},u_1)}\ldots \x^{\mathbf{g}(\mathbf{l}_{hf},u_h)}=\x^{\f(\f(\ldots\f({\mathbf{g} (\mathbf{l}_{1f},u_1)},{\mathbf{g}(\mathbf{l}_{2f},u_2)}),\ldots),{\mathbf{g}(\mathbf{l}_{hf},u_h)})}=\x^{\mathbf{p}_{f}(\u)}$, for some $h$-tuple of  polynomials $\mathbf{p}_f(\U)$. Now we can check easily the following equalities:
 \begin{align*}
  (g_f\x^{\v_f})^{-1}\x^{\t_i}g_f\x^{\v_f}&=\x^{\f(\f(\mathbf{g}(\v_f,-1),\mathbf{p}_{f}(\t_i)),\v_f)}\\
   (g_{ff'}\x^{\v_{ff'}})^{-1}g_f\x^{\v_i}g_{f'}\x^{\v_{f'}}&=\x^{\f(\f(\f(\mathbf{g}(\v_{ff'},-1),\mathbf{n}_{ff'}),\mathbf{p}_{f'}(\v_f)),\v_{f'})}\\
   g_f\x^{\v_f}g_{f^{-1}}\x^{\v_{f^{-1}}}&=\x^{\f(\mathbf{p}_{f^{-1}}(\v_f),\v_{f^{-1}})}\\
   \x^{\mathbf{c}(\mathbf{e}_i,\t_j)}&=\x^{\mathbf{c}(\mathbf{e}_i,\t_j)}\\
   g_f^{-1}\x^{\t_i}g_f&=\x^{\mathbf{p}_f(\t_i)} \\
   (\x^{\v_f})^{-1} g_f^{-1}x_ig_f\x^{\v_f}x_i^{-1}&=\x^{\f(\mathbf{g}(\v_f,-1),\f(\mathbf{l}_{if},\f(\v_f,-\mathbf{e}_i)))} \\
   (\x^{\v_{ff'f^{-1}}})^{-1}g_{ff'f^{-1}}^{-1}(g_fg_{f'}g_f^{-1})(g_f\x^{\v_{f'}}g_f^{-1})&=\x^{\f(\mathbf{g}(\v_{ff'f^{-1}},-1),\f(\mathbf{n}_{f(f'f^{-1})},\f(\mathbf{n}_{f'f^{-1}},\mathbf{p}_{f^{-1}}(\v_{f'}))))}
 \end{align*}
 Using these equalities we can transform each condition (1)-(7) into a condition of the form $\x^{\mathbf{z}(\t,\v)}\in B_\t$ as desired. This completes the proof.
   \end{proof}

\begin{proof}[Proof of Theorem \ref{main theorem}.]
Expression (\ref{formula integral}) gives the expression of $\zeta_{S_p,K}^*(s)$ as a $p$-adic integral over a subset $\mathcal{T}_{K,p}^*$ of $Tr(h,\mb{Z}_p)\times M_{K\setminus\{1\}\times h}(\mb{Z}_p)$; Corollary \ref{Algoritmo, corollary} and Proposition \ref{main theorem, application} give the description of $\mc{T}_{K,p}^*$ (up to a set of measure zero) with cone conditions where the polynomials are independent of $p$.\end{proof}

The proof of Theorem 1 is now complete. It follows from expression (\ref{expression as an Euler product of the relative local zeta functions}) and Theorem \ref{main theorem}.

It is proved in \cite[Corollary 3.2, Corollary 4.14 and Lemma 4.15]{dSG} that if a non-constant series $Z(s)$ is the Euler product of cone integrals over $\mb{Q}$ with cone integral data $\mc{D}$, that is, $Z(s)=Z_{\mc{D}}(s)$, then the abscissa of convergence of each $Z_{\mc{D}}(s,p)$ is strictly less than the abscissa of convergence of $Z_\mc{D}(s)$. Thus, if $Q$ is a finite set of primes, then $Z_\mc{D}(s)$ and $\prod_{p\notin Q} Z_{\mathcal{D}}(s,p)$ have the same abscissa of convergence (note that this is also the case if $Z(s)$ is constant). Using this and Theorem 1 we conclude
\begin{corollary}\label{Condition for having the same abscissa of convergence}
For $i=1,2$, let $S_i:1\rightarrow N_i\xrightarrow{\iota_i} G_i\xrightarrow{\pi_i} F_i\rightarrow 1$ be a virtually $\tau$-group, $K_i\in\mathscr{F}_{F_i}^*$ and let $Q$ be a finite set of primes. Then
\begin{enumerate}
  \item $\zeta_{S_1,K_1}^*(s)$ and $\prod_{p\notin Q}\zeta_{S_1,K_1,p}^*(s)$ have the same abscissa of convergence;
  \item if $\zeta_{S_1,K_1,p}^*(s)=\zeta_{S_2,K_2,p}^*(s)$ for all $p\notin Q$, then $\zeta_{S_1,K_1}^*(s)$ and $\zeta_{S_2,K_2}^*(s)$ have the same abscissa of convergence.
\end{enumerate}
\end{corollary}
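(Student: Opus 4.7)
The plan is to reduce the corollary to the analytic fact recalled in the paragraph just above: if $Z_\mc{D}(s)$ is a non-constant Euler product of cone integrals over $\mb{Q}$ with abscissa of convergence $\alpha_\mc{D}$, then each local factor $Z_\mc{D}(s,p)$ has abscissa of convergence strictly smaller than $\alpha_\mc{D}$ (property IV from \cite{dSG}); and if $Z_\mc{D}(s)$ is constant, the statement is trivial. Since Theorem~\ref{main theorem} produces for each $K_i$ a cone integral data $\mc{D}^*_i$ with
\[\zeta_{S_i,K_i,p}^{*}(s)=(1-p^{-1})^{-h(N_i)}Z_{\mc{D}^*_i}(s-h(N_i)-|K_i|+1,p),\]
the hypotheses of that fact are in place (after a fixed shift of the variable $s$, which does not affect the strict inequality between abscissae).

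For part (1), I would use Proposition~\ref{existence of Euler product} to split
\[\zeta_{S_1,K_1}^{*}(s)=\prod_{p\in T}\zeta_{S_1,K_1,p}^{*}(s)\cdot\prod_{p\notin T}\zeta_{S_1,K_1,p}^{*}(s).\]
By the fact above, each factor $\zeta_{S_1,K_1,p}^{*}(s)$ with $p\in T$ has abscissa of convergence strictly smaller than that of the full Euler product, so the finite product $\prod_{p\in T}\zeta_{S_1,K_1,p}^{*}(s)$ also has strictly smaller abscissa. Dividing a Dirichlet series with non-negative coefficients by another with strictly smaller abscissa of convergence does not change the original's abscissa (in a strip to the left of the original abscissa, the denominator is a non-vanishing absolutely convergent Dirichlet series), which gives (1). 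In the degenerate case where $\zeta_{S_1,K_1,p}^{*}(s)$ is the constant $1$ for almost all primes, both sides of (1) have abscissa $-\infty$ and the claim is immediate.

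For part (2), I would chain (1) twice: by (1) applied to $S_1$, the series $\zeta_{S_1,K_1}^{*}(s)$ and $\prod_{p\notin T}\zeta_{S_1,K_1,p}^{*}(s)$ have the same abscissa of convergence; by the hypothesis, this product coincides with $\prod_{p\notin T}\zeta_{S_2,K_2,p}^{*}(s)$; and by (1) applied to $S_2$, this last product has the same abscissa of convergence as $\zeta_{S_2,K_2}^{*}(s)$. The only substantive ingredient is the strict inequality between the abscissae of the local factors and that of the global Euler product, which was already established for cone integrals in \cite{dSG} and imported here via Theorem~\ref{main theorem}; beyond that there is no further obstacle.
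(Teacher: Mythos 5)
Your proof is correct and follows the same route as the paper: the paper deduces the corollary from the strict inequality between the abscissa of a nonconstant Euler product of cone integrals and the abscissae of its local factors (property IV from \cite{dSG}), combined with Theorem~\ref{thm2} and the Euler product of Proposition~\ref{existence of Euler product}, and that is exactly what you do. You merely spell out the finite-product and quotient bookkeeping for part (1) and the chaining for part (2), which the paper leaves implicit in the sentence preceding the corollary.
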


\subsection{The abscissa of convergence as an invariant of the $\mb{Q}$-Mal'cev completion}
If $S^{\mb{Q}}: 1\rightarrow N^\mb{Q}\rightarrow G^\mb{Q}\rightarrow F\rightarrow 1$ is the $\mb{Q}$-Mal'cev completion of some virtually $\tau$-group $S$, then the isomorphism class of $S^{\mb{Q}}$ is completely determined by $G^\mb{Q}$. This is because $N^\mb{Q}$ is the unique maximal $\mb{Q}$-radicable subgroup of $G^\mb{Q}$. 

Now we fix the group $G^\mb{Q}$ from $S^\mb{Q}$ and we consider $N^\mb{Q}$ as a subgroup of $G^\mb{Q}$. Given a subgroup $G_1$ of $G^\mb{Q}$ such that $N^{\mb{Q}}G_1=G^\mb{Q}$ and such that $N_1:=N^{\mb{Q}}\cap G_1$ is a $\tau$-group with $\mb{Q}$-Mal'cev completion $N^{\mb{Q}}$, then the short exact sequence $S_1:1\rightarrow N_1\rightarrow G_1\rightarrow F \rightarrow 1$, induced by $S^\mb{Q}$, is a virtually $\tau$-group with $\mb{Q}$-Mal'cev completion $S^{\mb{Q}}$ (and the inclusion maps $N_1\rightarrow N^\mb{Q}$, $G_1\rightarrow G^\mb{Q}$). Let $\mc{H}(G^\mb{Q})$ be the set of all those virtually $\tau$-groups $S_1$ which are obtained in this way. Note that every virtually $\tau$-group with $\mb{Q}$-Mal'cev completion $S^\mb{Q}$ is isomorphic to a virtually $\tau$-group in $\mc{H}(G^\mb{Q})$. Therefore, Theorem 2 will be a consequence of the following proposition.

 \begin{proposition}
 If $S_1,S_2\in\mc{H}(G^\mb{Q})$ then $\zeta_{S_1,K}^*(s)$ and $\zeta_{S_2,K}^*(s)$ have the same abscissa of convergence for any $K\in\mathscr{F}_F^*$.
 \end{proposition}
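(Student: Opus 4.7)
The plan is to reduce the proposition to a comparison of local factors at individual primes, and then to apply the uniqueness result for $\mb{Z}_p$-Mal'cev completions, Proposition \ref{Malcev completions that are splits}, at almost all primes $p$. First, by Corollary \ref{Condition for having the same abscissa of convergence}(2) it is enough to show that $\zeta_{S_1,K,p}^*(s)=\zeta_{S_2,K,p}^*(s)$ for all but finitely many primes $p$. By Proposition \ref{equality between zeta function of a group and that of its profinite completion} and the Euler product (\ref{expression as an Euler product of the relative local zeta functions}), each such local factor depends only on the pro-$p$ completion $(S_i)_p$ together with the subgroup $K\subseteq F$. So it suffices to build, for almost all $p$, an isomorphism $(S_1)_p\cong(S_2)_p$ of short exact sequences inducing the identity on $F$ (so that $K$ is carried to $K$).

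To apply Proposition \ref{Malcev completions that are splits} with $R=\mb{Z}_p$, I restrict to primes $p\nmid|F|$ so that $|F|$ is a unit in $\mb{Z}_p$, and then I have to check that the abstract kernels $\rho_1^{\mb{Z}_p}$ and $\rho_2^{\mb{Z}_p}$ of $(S_1)_p$ and $(S_2)_p$ are conjugate via a pair $(\alpha,\gamma)$ with $\gamma=\opn{id}_F$. Here the common ambient $G^\mb{Q}$ is essential: since $N_1$ and $N_2$ are $\tau$-groups inside $N^\mb{Q}$ sharing the same $\mb{Q}$-Mal'cev completion $N^\mb{Q}$, the intersection $N_0:=N_1\cap N_2$ has finite index in both. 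Put $m_0=[N_1:N_0][N_2:N_0]$; for any prime $p$ coprime to $m_0$, the inclusions $N_0\hookrightarrow N_i$ induce isomorphisms of pro-$p$ completions $(N_0)_p\cong(N_i)_p$, and I take $\alpha:(N_1)_p\to(N_2)_p$ to be the natural composite.

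The heart of the argument is to check that this $\alpha$ intertwines the abstract kernels. For each $f\in F$, choose lifts $g_f^{(i)}\in G_i$ of $f$, so $g_f^{(1)}=g_f^{(2)}n_f$ for some $n_f\in N^\mb{Q}$; conjugation by $g_f^{(i)}$ represents $\rho_i^{\mb{Z}_p}(f)\in\opn{Out}((N_i)_p)$ after extension to the pro-$p$ completion. Because $N^\mb{Q}$ is the $\mb{Q}$-Mal'cev completion of $N_0$ and $F$ is finite, there is a single integer $m$ with $n_f^m\in N_0$ for every $f\in F$. For any prime $p$ coprime to $mm_0|F|$, $m$ is a unit in $\mb{Z}_p$, so as a $\mb{Z}_p$-powered pro-$p$ group $(N_0)_p$ contains the unique $m$-th root of $n_f^m$, namely $n_f$. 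Hence conjugation by $n_f$ is an inner automorphism of $(N_0)_p\cong(N_2)_p$, which gives $\rho_2^{\mb{Z}_p}(f)=\Phi_\alpha\,\rho_1^{\mb{Z}_p}(f)$ in $\opn{Out}((N_2)_p)$. Proposition \ref{Malcev completions that are splits} then yields an isomorphism $(S_1)_p\cong(S_2)_p$ of short exact sequences that is the identity on $F$; combining over all good $p$ completes the proof.

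The main obstacle I expect is the rigorous implementation of the claim that $n_f\in(N_0)_p$ for almost all $p$, which requires carefully identifying $(N_0)_p$ as a $\mb{Z}_p$-powered pro-$p$ group sitting naturally inside both $(N_1)_p$ and $(N_2)_p$, and tracking the three abstract kernels through these identifications so that the conjugacy of Proposition \ref{Malcev completions that are splits} can be applied with $\gamma=\opn{id}_F$ (not merely with some automorphism of $F$, which would only yield $\zeta_{S_1,K}^*(s)$ equal to $\zeta_{S_2,\gamma(K)}^*(s)$ and not the desired equality).
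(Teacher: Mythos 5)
Your overall reduction (pass to local factors via Corollary \ref{Condition for having the same abscissa of convergence}, then build for almost all $p$ an isomorphism $(S_1)_p\cong(S_2)_p$ inducing the identity on $F$) coincides with the paper's, but from that point on your route is genuinely different. The paper does \emph{not} go through abstract kernels or Proposition \ref{Malcev completions that are splits}. Instead it first observes that $S_1\vee S_2:1\to N^\mb{Q}\cap(G_1\vee G_2)\to G_1\vee G_2\to F\to 1$ again lies in $\mc{H}(G^\mb{Q})$, so one may replace $S_1$ by $S_1\vee S_2$ and assume $G_2\leq G_1$. Then, for $p\nmid[G_1:G_2]=[N_1:N_2]$, the inclusion morphism $(\alpha,\beta,\opn{id}_F):S_2\to S_1$ pro-$p$-completes (by Proposition \ref{functoriality property of the Malcev completion}) to $(\alpha_p,\beta_p,\opn{id}_F):(S_2)_p\to(S_1)_p$; since $\alpha_p$ is an isomorphism ($p$ coprime to the index), the five lemma forces $\beta_p$ to be an isomorphism too, and one is done. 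Taking the \emph{join} $G_1\vee G_2$ rather than the intersection $N_1\cap N_2$ is the key simplification: it produces a single ambient virtually $\tau$-group in which both $S_1$ and $S_2$ sit, so all conjugations live in one place and functoriality does the rest.

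Your intersection-plus-abstract-kernel approach can be made to work, but you should be aware of two nontrivial gaps that the paper's join trick avoids. First, the assertion ``$(N_0)_p$ contains the unique $m$-th root of $n_f^m$, namely $n_f$'' is only meaningful after you have embedded $N^\mb{Q}$ and $(N_0)_p$ into a common $\mb{Q}_p$-radicable group where $m$-th roots are unique; this requires setting up (say) the $\mb{Q}_p$-Mal'cev completion of $S^\mb{Q}$, which the paper never needs to introduce. Likewise, ``conjugation by $n_f$,'' ``conjugation by $g_f^{(1)}$'' and ``conjugation by $g_f^{(2)}$'' must all be interpreted in one ambient group before the identity $\sigma_{g_f^{(1)}}=\sigma_{g_f^{(2)}}\circ\sigma_{n_f}$ makes sense on $(N_0)_p$. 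Second, Proposition \ref{Malcev completions that are splits} as stated only asserts the \emph{existence} of an isomorphism of short exact sequences; your argument needs the stronger statement that, given the conjugacy datum $(\alpha,\opn{id}_F)$, the isomorphism can be chosen to restrict to $\opn{id}_F$ on $F$. That refinement is indeed implicit in the $H^2$-vanishing argument, but it is not in the text, so you would have to unwind the proof of \cite[Lemma 3.1.2]{D} to extract it. Both issues are surmountable, but they make your proof considerably heavier than the paper's three-line functoriality argument, and you lose more primes along the way ($p\nmid mm_0|F|$ rather than just $p\nmid[G_1:G_2]$ — harmless for the abscissa, but worth noting).
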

 \begin{proof}
 If $S_1,S_2\in\mc{H}(G^\mb{Q})$, then $G_1$ and $G_2$ are finitely generated and so is the subgroup $G_1\vee G_1$ of $G^\mb{Q}$ generated by them. Then $N^\mb{Q}\cap (G_1\vee G_2)$ is finitely generated because it has finite index in $G_1\vee G_2$ and thus it is a $\tau$-group with $\mb{Q}$-Mal'cev completion $N^\mb{Q}$. It follows that $S_1\vee S_2: 1\rightarrow N^\mb{Q}\cap (G_1\vee G_2) \rightarrow G_1\vee G_2\rightarrow F\rightarrow 1$ lies in $\mc{H}(G^\mb{Q})$.  Thus, after replacing $S_1$ by $S_1\vee S_2$, we can assume that $G_2\leq G_1$. 
 
 According to Corollary \ref{Condition for having the same abscissa of convergence} and Proposition \ref{expression as an Euler product of the relative local zeta functions}, it is enough to prove that $\zeta_{{S_1}_p,K}^*(s)$ and $\zeta_{{S_2}_p,K}^*(s)$ coincide for all but a finite number of primes $p$. Choose a prime $p$ which does not divide $[G_1:G_2]=[N_1:N_2]$ and consider the morphism $(\alpha,\beta,id_F):S_2\rightarrow S_1$ given by inclusions $\alpha:N_2\rightarrow N_1$, $\beta: G_2\rightarrow G_1$. Let $(\alpha_p,\beta_p,id_F):{S_2}_p\rightarrow {S_1}_p$ be the associated morphism (Corollary \ref{functoriallity property of the Malcev completion, corollary}). Since $p$ does not divide $[N_1:N_2]$, $\alpha_p$ must be the identity, and therefore $\beta_p$ must be an isomorphism. Then ${S_1}_p={S_2}_p$ and, therefore,  $\zeta_{{S_1}_p,K}^*(s)=\zeta_{{S_2}_p,K}^*(s)$.
  \end{proof}

\subsection{Bound for the abscissa of convergence.}
It is proved in \cite[Proposition 5.6.4]{LS} (in a more general context) that if $G$ is a group containing a normal subgroup $N$ of finite index which is nilpotent and finitely generated, then $\alpha^\s(G)\leq \alpha^\s(N)+1$. In the next proposition we observe that the proof of this fact in \emph{loc.\ cit.\ }can be adapted in order to prove also that $\alpha^\lhd(G)\leq\alpha^\lhd(N)+1$. With this, we will be able to give a complete proof of Theorem 3.
\begin{proposition}\label{normal subgroup growth}
Let $S:\SES$ be a virtually nilpotent group and let $*\in\{\leq,\lhd\}$. Then for each $K\in\mathscr{F}^*_F$ we have $\alpha_{S,K}^*\leq \alpha_N^*+1$, where $\alpha_{S,K}^*$ is the abscissa of convergence of $\zeta_{S,K}^*(s)$.
\end{proposition}
\begin{proof}
Let $*\in\{\leq,\lhd\}$. There is no loss of generality if we assume that $K=F$. Let $a_n^*(S)$ be the coefficient of $n^{-s}$ in the series $\zeta_{S,F}^*(s)$, that is, $a_n^*(S)$ is the number of subgroups $A$ of $G$ such that $A*G$, $A$ has index $n$ in $G$ and $AN=G$.
It is easy to see that $a_n^*(S)\leq a_n^*(N)\psi(n)$, where $\psi(n)$ is the maximum of $\opn{der}(G/N,T)$, where $T$ ranges over all sections $C/D$ of $N$ such that $D*N$ and $[N:D]=n$ (here $\opn{der}(G/N,T)$ denotes the supremum of $|\opn{Der}_\rho(G/N,T)|$ for all actions $\rho$ of $G/N$ on $T$, and $\opn{Der}_\rho(G/N,T)$ is the set of derivations from $G/N$ into the $G/N$-group $T$ with action $\rho$). In fact, this follows because if $A$ is a subgroup of $G$ such that $AN=G$, then $B=A\cap N$ is a subgroup of $G$ such that $[N:B]=[G:A]$, $B$ is normal in $N$ if $A$ is normal in $G$, $A/B$ is a complement of $N_N(B)/B$ in $N_G(B)/B=N_N(B)A/B$, and $N_G(B)/N_N(B)\cong G/N$.  Now, it follows from \cite[Lemma 5.6.3]{LS} that $\psi(n)\leq cn$ for some constant $c$ which is independent of $n$ and therefore $a_n^*(S)\leq ca_n^*(N)n$ for all $n\in N$. Then for $t\in\mathbb{R}$ we have
\begin{align*}
\zeta_{S,F}^*(t)=\sum_{n=1}^\infty \frac{a_n^*(S)}{n^t}\leq \sum_{n=1}^\infty c\frac{a_n^*(N)n}{n^t}=c\zeta_N^*(t-1),
\end{align*}
meaning that $\zeta_{S,F}^*(t)$ converges whenever $\zeta_N^*(t-1)$ does. This concludes the proof of the proposition.
\end{proof}

\emph{Proof of Theorem 3:} I.\ $\alpha_G^*$ is the maximum of the set $\{\alpha^*_{S,K}:K\in\mathscr{F}^*_F\}$ and since each $\zeta_{S,K}^*(s)$, being an Euler product of cone integrals over $\mb{Q}$ (Theorem 1), has rational abscissa of convergence, it follows that $\alpha^*_G$ is rational. It also follows from Proposition \ref{normal subgroup growth} that $\alpha^*_G\leq \alpha^*_N+1$. Since $\alpha^*_N\leq h(N)$ \cite[Proposition 1]{GSS} and $h(N)=h(G)$, we obtain  $\alpha^*_G\leq h(G)+1$. Now II  follows from III  and the first part of III follows from Theorem 1 and the properties of cone integrals stated in the introduction. The final part of III is a direct consequence of Theorem 2.
 
\bibliographystyle{plain}

%\bibliography{zeta_functions_of_virtually_nilpotent_groups}

\end{document}